\title{Feasible-Side Global Convergence in Experimental Optimization}
\author{G. A. Bunin\thanks{Laboratoire d'Automatique, Ecole Polytechnique F\'ed\'erale de Lausanne, 
        Lausanne, Switzerland, 
        1015 ({\tt gene.a.bunin@ccapprox.info}).} \and G. Fran\c cois \and D. Bonvin}
\begin{document}

\maketitle

\begin{abstract}
We propose a canonical form of the experimental optimization problem and review the state-of-the-art methods to solve it. As guarantees of global convergence to an optimal point via only feasible iterates are absent in these methods, we present a set of sufficient conditions to enforce this behavior. 
\end{abstract}

\begin{keywords} 
experimental optimization, real-time optimization, black-box optimization, derivative-free optimization, feasible-side convergence, global convergence 
\end{keywords}

\begin{AMS}
49K21, 49M30
\end{AMS}

\pagestyle{myheadings}
\thispagestyle{plain}
\markboth{G. A. BUNIN, G. FRAN\c COIS, D. BONVIN}{FEASIBLE-SIDE GLOBAL CONVERGENCE IN EXPERIMENTAL OPTIMIZATION}

\section{The Experimental Optimization Problem}

Consider the problem

\begin{equation}\label{eq:mainprob}
\begin{array}{rll}
\mathop {{\rm{minimize}}}\limits_{\bf{u}} & \phi_p ({\bf{u}}) & \\
{\rm{subject}}\hspace{1mm}{\rm{to}} & g_{p,j}({\bf{u}}) \leq 0, & j = 1,...,n_{g_p} \\
 & g_{j}({\bf{u}}) \leq 0, & j = 1,...,n_{g} \\
& {\bf{u}}^L \preceq {\bf u} \preceq {\bf u}^U, &
\end{array}
\end{equation}

\noindent where ${\bf u} \in \mathbb{R}^{n_u}$ are independent decision variables subject to the bounds ${\bf u}^L$ and ${\bf u}^U$ -- the curly brackets ($\prec$) denoting componentwise inequality -- and $\phi, g : \mathbb{R}^{n_u} \rightarrow \mathbb{R}$ are cost and constraint functions, respectively.

The characteristic element of (\ref{eq:mainprob}) is the presence of \emph{experimental} functions, denoted by the subscript $p$ (for ``plant''), which \emph{may only be evaluated by conducting an experiment for a given choice of ${\bf u}$ and whose values cannot be known otherwise}. In this work, the term ``experiment'' will be employed to denote a repeatable but expensive task, where ``repeatable'' means that carrying out the task once with the variables ${\bf u}_a$ and again with ${\bf u}_b$ will yield identical results if ${\bf u}_a = {\bf u}_b$, while ``expensive'' implies that carrying out the task either is financially costly (e.g., machining a very expensive space shuttle component), requires a lot of time (e.g., simulating one day of traffic behavior for a large metropolis), or may only be done very infrequently (e.g., producing a large batch of a pharmaceutical compound once every three months). Of course, such expenses are not mutually exclusive and may also occur together. By contrast, the constraints without the $p$ subscript indicate numerical functions that can be easily evaluated for any given ${\bf u}$ without requiring any experiments.

Consequently, we will refer to (\ref{eq:mainprob}) as an \emph{experimental optimization} problem. The first formal studies on methodologically solving such problems may be traced back to the 1940s, 50s, and 60s, with the works of Hotelling \citep{Hotelling1941}, Box \citep{Box:51,Box:69}, Brooks \citep{Brooks1959,Brooks1961}, and Spendley \emph{et al.} \citep{Spendley1962} essentially representing the foundations of this field. The methods that came out of these works -- namely, those of (experimental) steepest ascent, evolutionary operation, response-surface modeling, and the simplex algorithm -- have remained popular to the present day and are still employed in a number of diverse applications \citep{Guerra1999,Holmes2003,Guvenc2004,He2005,Bas2007,Bezerra2008,Myers2009}. Additionally, there are entire fields of research dedicated to solving problems that may be cast in the form of (\ref{eq:mainprob}). We cite, as some examples that we have encountered:

\begin{itemize}
\item steady-state optimization \cite{Chen1987,Fatora1992,Naysmith1995,Cheng2004,Brdys2005,Gao2005,Flemming2007,Engell2007,Tatjewski2008} (other names: ``real-time optimization'', ``on-line optimization'', ``measurement-based optimization'', ``set-point optimization'', ``multilayer optimizing control''),
\item optimization of a dynamic profile in a batch process \cite{Srinivasan:03,Srinivasan:03a,Francois2005,Kadam:07,Georgakis:09,Costello:11} (other names: ``run-to-run/batch-to-batch/cycle-to-cycle optimization'', ``dynamic real-time optimization'', ``dynamic measurement-based optimization'', ``(dynamic) optimization of batch processes''),
\item iterative, or run-to-run, controller tuning/design \cite{Hjalmarsson:98,Karimi:04,Krstic:06,Magni2009,Bunin2013p},
\item numerical optimization with expensive function evaluations \cite{Elster1997,Jones1998,Conn2009,Billups2013}.
\end{itemize}

Despite a sufficiently large body of literature, it remains the case that there still exists no theoretically rigorous framework to guarantee that one actually solves Problem (\ref{eq:mainprob}) \emph{reliably}. By ``reliable'', we mean that:

\begin{itemize}
\item The algorithm used to solve (\ref{eq:mainprob}) generates a sequence of experimental iterates ${\bf u}_0,{\bf u}_1,{\bf u}_2,...$ that converge to a set ${\bf u}^*$ that is locally optimal in some sense. This is important since an algorithm that converges to a suboptimal value may be of limited use in practice, especially if one does not know in advance what the potential suboptimality is.
\item The algorithm used to solve (\ref{eq:mainprob}) generates a sequence of experimental iterates ${\bf u}_0,{\bf u}_1,{\bf u}_2,...$ that satisfy all of the problem constraints, \emph{including the experimental constraints} $g_p$, at \emph{every experiment}. This point -- well worth the emphasis -- represents another major characteristic of the experimental optimization problem, as running an experiment that violates some constraint(s) could potentially endanger personnel, promote a hazardous environment, or cause permanent equipment damage. While such test points would simply be ``discarded'' in a numerical optimization framework with only a feasible subsequence considered in the analysis, in experimental optimization they may not be, and so a reliable scheme must only generate experimental iterates guaranteed to satisfy the constraints \emph{always}.
\end{itemize}
 
The main contribution of this work is to build the foundations of such a framework by presenting a set of conditions sufficient to guarantee feasible-side global convergence. So as to maintain as much of the generality of (\ref{eq:mainprob}) as possible, only the following two assumptions are made on the nature of the problem:

\begin{enumerate}[]
\item {\bf{A1}}: The functions $\phi_p$, $g_p$, and $g$ are twice continuously differentiable ($C^2$) on an open set containing the experimental space $\mathcal{I} = \{ {\bf u} : {\bf u}^L \preceq {\bf u} \preceq {\bf u}^U \}$.
\item {\bf{A2}}: The initial experimental iterate, ${\bf{u}}_0$, is strictly feasible with respect to the experimental constraints ($g_{p,j} ({\bf{u}}_0) < 0, \; \forall j = 1,...,n_{g_p}$), feasible with respect to the numerical constraints ($g_{j} ({\bf{u}}_0) \leq 0, \; \forall j = 1,...,n_{g}$), and lies in the experimental space (${\bf u}_0 \in \mathcal{I}$).
\end{enumerate}

\noindent Here, Assumption A1 is required to obtain general-purpose Lipschitz bounds \cite{Bunin:Lip,Cartis2013}, which follow from the existence and boundedness of the first and second derivatives on the experimental space and are crucial throughout the analysis. Assumption A2 is needed due to the feasible-side requirement, but should not be restrictive since one would not expect to begin optimization with a hazardous experiment at ${\bf u}_0$.

Following a brief review of the state of the art in experimental optimization in Section \ref{sec:review}, we proceed to summarize the sufficient conditions for feasible-side global convergence in Section \ref{sec:scfo}. Because these conditions cannot, in general, be expected to hold innately, we propose a numerical implementation method to enforce them in Section \ref{sec:implement} and prove, in Section 5, that such an implementation must generate a sequence of feasible experimental iterates that converge arbitrarily close to a Fritz John (FJ) stationary point while monotonically decreasing the cost function value. As this implementation requires certain tunable parameters to be sufficiently small to guarantee optimality, an adaptive scheme to choose these parameters without significantly compromising convergence speed is proposed in Section 6. An illustration of the method is then given in Section \ref{sec:example}, after which we conclude the paper with some remarks on the practical usefulness of the method, what has been achieved, and what is planned for the future.

\section{Viable Algorithms and Guarantees}
\label{sec:review}

In reviewing the existing methods capable of solving (\ref{eq:mainprob}), we choose to classify the relevant algorithms as being either model-based or model-free. As our focus in this work is particularly on theoretical feasibility and optimality \emph{guarantees}, we review only these aspects and do not treat issues like convergence speed or ease of implementation. 

\subsection{Model-based Algorithms}

We will consider as ``model-based'' any algorithm that attempts to solve (\ref{eq:mainprob}) by iteratively solving its parameterized model approximation

\begin{equation}\label{eq:mainprobmod}
\begin{array}{rll}
\mathop {{\rm{minimize}}}\limits_{\bf{u}} & \phi_{\hat p} ({\bf{u}},{\boldsymbol \theta}) & \\
{\rm{subject}}\hspace{1mm}{\rm{to}} & g_{{\hat p},j}({\bf{u}},{\boldsymbol \theta}) \leq 0, & j = 1,...,n_{g_p} \\
 & g_{j}({\bf{u}}) \leq 0, & j = 1,...,n_{g} \\
& {\bf{u}}^L \preceq {\bf u} \preceq {\bf u}^U, &
\end{array}
\end{equation}

\noindent with $\hat p$ denoting the models of the experimental functions and ${\boldsymbol \theta}$ the parameters. As one might expect, what essentially differentiates one model-based algorithm from another are the nature of ${\boldsymbol \theta}$ and how these parameters are estimated.

A popular model-based framework is that of trust-region methods \cite{Alexandrov1997,Elster1997,Conn2000,Wright2004,Peng2006,Conn2009,Billups2013}, where the model functions are usually quadratic and ${\boldsymbol \theta}$ are the coefficients of the models. While generally a tool used for numerical optimization, these methods could certainly be applied to experimental problems \citep{Elster1997,Billups2013} and would proceed by constructing an approximate model based on local measurements, optimizing this model over a local trust region, and, depending on whether the new solution led to improvement or not, either increasing/decreasing the trust region and, when necessary, reconstructing the model. Global convergence results are available for the unconstrained case \citep{Conn2009} and the case with ``simple'' constraints \citep{Conn2000}, with a penalty function method to force global convergence for the general constrained problem (\ref{eq:mainprob}) also a possibility \citep{Liuzzi2010}. However, feasible-side convergence in the experimental-setting sense is absent in these methods, and while algorithms with only ``feasible iterates'' do exist \cite{Wright2004,Peng2006}, what is considered a single iteration in these algorithms is not the application of a single ${\bf u}$ but a series of such ${\bf u}$, with only the guarantee that the final choice is feasible (the infeasible ${\bf u}$ being discarded). Penalty-function methods like those proposed in \citep{Liuzzi2010} guarantee feasibility upon asymptotic convergence, but cannot guarantee that every ${\bf u}$ applied satisfy the constraints of (\ref{eq:mainprob}).

In the engineering context, a popular model-based technique is that of identification followed by optimization (e.g., the so-called ``two-phase'' or ``two-step'' approach \cite{Chen1987,Jang1987} in the chemical engineering literature or the ``indirect'' tuning method \cite{Landau2011} in adaptive control), where a first-principles parametric model of the system under consideration is updated by re-estimating the parameters following the acquisition of new data, with the updated model then being optimized to yield a new optimal target. Despite its fairly wide acceptance in industry \cite{Fatora1992,Quelhas:12}, the prominent theoretical weakness of this method is its inability to adapt to \emph{structural} model errors -- also known as ``plant-model mismatch'' \cite{Krishnan1992} -- as convergence to a stationary point can only be guaranteed if (\ref{eq:mainprob}) and (\ref{eq:mainprobmod}) have the same stationarity conditions \cite{Biegler1985}. While trust-region methods may avoid this by simply shrinking the trust region until the quadratic model becomes a suitable approximation of the structure of the true problem, the identification-optimization technique usually maintains the same optimization domain and only updates the parameters. As such, this method comes without guarantees of convergence to an optimum. Furthermore, feasibility guarantees are also absent. While some work has attempted to guarantee that the constraints of (\ref{eq:mainprob}) be satisfied robustly for various stochastically distributed values of ${\boldsymbol \theta}$ \cite{Zhang2002,Li2008}, such approaches are not generally robust as they inherently assume the availability of a model where all of the modeling errors are parametric. A more practical alternative is to add safety ``back-offs'' to the constraints \cite{Loeblein1998,Loeblein1999,Govatsmack2005,Quelhas:12}, but this is only an \emph{ad hoc} solution that ultimately does not guarantee feasibility while introducing suboptimality into the solution of (\ref{eq:mainprob}).  

A proposed alternative to the identification-optimization approach is that of Karush-Kuhn-Tucker (KKT) correction methods, which are based on the original work of Roberts \cite{Roberts1978} and are known in the literature as ``ISOPE'' (integrated system optimization and parameter estimation) \cite{Brdys2005,Gao2005,Xu2008} or as ``modifier adaptation'' \cite{Marchetti2009a,Rodger2010}. The motivation behind these methods is precisely the convergence problem of the identification-optimization approach, which is avoided by adding first-order correction terms to the model functions. Unlike the identification-optimization approach, one need not update the inherent model parameters but only the correction terms (these latter thus take the role of ${\boldsymbol \theta}$ in (\ref{eq:mainprobmod})), which ensure that the structures of (\ref{eq:mainprob}) and (\ref{eq:mainprobmod}) match locally to first order. As this is sufficient for stationarity, it follows that such an approach guarantees both feasibility and convergence to a stationary point \emph{if the scheme converges}. However, the guarantee that the scheme converge globally is still a topic of research. In their monograph, Brdys and Tatjewski \cite{Brdys2005} provide sufficient conditions for the special case where no experimental constraints are present and where the numerical constraints $g$ are convex. A general conceptual sufficient condition based on fixed-point theory has been proposed in \cite{Faulwasser2014}. The recent work in \cite{BuninMATR} has proposed adding a trust-region ``wrapping'' to these methods so that the global convergence properties of trust-region schemes may be obtained, with the same idea, though in a different context, also presented in \cite{Biegler2014}. As none of these approaches guarantee feasible-side iterates in the presence of experimental constraints, the work in \cite{Bunin2011} has proposed using a filter in the adaptation between experiments as a means of preserving feasibility, but in doing so sacrificed a number of important convergence and optimality properties.

Finally, a very popular and simple approach is that of response-surface modeling \citep{Bas2007,Bezerra2008,Myers2009}, where a set of prescribed and optimally designed experiments are carried out to construct a data-driven model of (\ref{eq:mainprob}). In many applications, either a central composite or a Box-Behnken \cite{Ferreira2007} experimental design is used and a quadratic model is built \cite{Myers2009}, but more advanced alternatives may also be pursued \cite{Jones1998}. While very similar in nature to trust-region methods, the fundamental difference lies in the fact that response-surface modeling generally attempts to construct a \emph{global} approximation of the problem over the entire experimental space. For models of fixed structure, this obviously presents the same theoretical disadvantage as the identification-optimization approach discussed earlier -- i.e., if the chosen response models are quadratic and the experimental functions are not, it is impossible to prove that any choice of model coefficients (${\boldsymbol \theta}$) will yield a stationary point of (\ref{eq:mainprob}). An adaptive structure that is consistent in the sense that it is able to globally approximate the true function arbitrarily well as more data is obtained would, however, be able to guarantee global convergence, the DACE model used by Jones {\it et al.} \citep{Jones1998} being one example. However, it is once more feasibility that poses a major issue, as response-surface modeling does not take the experimental constraints into account when choosing what experiments to run while constructing the model.

\subsection{Model-free Algorithms}

By ``model-free'' we refer to algorithms that do not start with, nor attempt to construct, a model of (\ref{eq:mainprob}).

Direct search methods are, without question, the best established of the algorithms that belong to this class, and are generally known for their robust convergence properties \cite{Lewis2000}. In addition to the classic approaches like the pattern search of Hooke and Jeeves \cite{Hooke1961}, methods like the simplex approach \cite{Spendley1962,Nelder1965} and evolutionary operation \cite{Box:69} have been accepted in both laboratory and industrial experimental settings \cite{King1975,Wade1990,Holmes2003,Biercuk2009} and could be easily modified to obtain global convergence for the unconstrained case (using step-size reduction techniques and the like -- see, e.g., \cite{Conn2009}), with the constrained case following suit via standard (e.g., penalty-function) techniques \cite{Liuzzi2010}. To the best of our knowledge, no guarantees regarding purely feasible-side experimental iterates are available, however. While many of these methods can enforce feasibility by changing the step size in the often-employed line search, there is no guarantee that all points tested during a given line search will satisfy the constraints of (\ref{eq:mainprob}) as required in the experimental setting -- the infeasible points, again, simply being discarded.

A natural model-free alternative to direct search methods is the approximate gradient-descent method \citep{Brooks1961,DelCastillo1997}, where the gradient of the cost function is estimated and then used in a line search. The standard methods of estimating the gradient include taking finite differences or regressing the available data, although in contexts where Problem (\ref{eq:mainprob}) involves a transient stage one may also use the dynamic data obtained during an experiment to estimate the gradient of the inherently \emph{static} experimental functions -- see, e.g., \cite{Hjalmarsson:98} or \cite{Bamberger1978}. Global convergence may be proven to within a certain tolerance that depends on the error of the estimate \cite{Gratton2011} in the unconstrained case, and a natural extension to the constrained case using the penalty approach is possible \cite{Liuzzi2010}. No guarantee of feasibility is available during the line search.

\section{Sufficient Conditions for Feasible-Side Global Convergence}
\label{sec:scfo}

Letting $k$ denote the experiment counter (the experimental iteration), we proceed to state a set of recursive conditions that, when satisfied by every future experimental iterate ${\bf u}_{k+1}$ given the information obtained from the current experiment at ${\bf u}_k$, are sufficient to guarantee feasible-side global convergence:

\begin{equation}\label{eq:SCFO1}
g_{p,j}({\bf u}_k) + \displaystyle \mathop {\sum} \limits_{i=1}^{n_u} \kappa_{p,ji} | u_{k+1,i} - u_{k,i} | \leq 0, \;\; \forall j = 1,...,n_{g_p},
\end{equation}

\begin{equation}\label{eq:SCFO2}
g_{j}({\bf u}_{k+1}) \leq 0, \;\; \forall j = 1,...,n_g,
\end{equation}

\begin{equation}\label{eq:SCFO3}
{\bf u}^L \preceq {\bf u}_{k+1} \preceq {\bf u}^U,
\end{equation}

\begin{equation}\label{eq:SCFO4}
\nabla g_{p,j} ({\bf u}_k)^T ({\bf u}_{k+1} - {\bf u}_k) < 0, \;\; \forall j: g_{p,j} ({\bf u}_k) \approx 0,
\end{equation}

\begin{equation}\label{eq:SCFO5}
\nabla g_{j} ({\bf u}_k)^T ({\bf u}_{k+1} - {\bf u}_k) < 0, \;\; \forall j: g_{j} ({\bf u}_k) \approx 0,
\end{equation}

\begin{equation}\label{eq:SCFO6}
\nabla \phi_{p} ({\bf u}_k)^T ({\bf u}_{k+1} - {\bf u}_k) < 0,
\end{equation}

\begin{equation}\label{eq:SCFO7}
\nabla \phi_p ({\bf u}_k)^T ({\bf u}_{k+1} - {\bf u}_k) \displaystyle+ \frac{1}{2} \mathop {\sum} \limits_{i_1=1}^{n_u} \mathop {\sum} \limits_{i_2=1}^{n_u} M_{\phi,i_1 i_2} | (u_{k+1,i_1} - u_{k,i_1})(u_{k+1,i_2} - u_{k,i_2}) | \leq 0,
\end{equation}

\noindent where $\kappa_{p,ji}$ and $M_{\phi,i_1 i_2}$ are used to denote the univariate Lipschitz constants of the experimental constraint functions and the Lipschitz constants of the cost function derivatives, respectively, and are defined implicitly as

\begin{equation}\label{eq:lipcon}
- \kappa_{p,ji} < \frac{\partial g_{p,j}}{\partial u_i} \Big |_{\bf u} < \kappa_{p,ji}, \;\; \forall {\bf u} \in \mathcal{I},
\end{equation}

\begin{equation}\label{eq:lipcon2}
-M_{\phi,i_1 i_2} < \frac{\partial^2 \phi_p}{\partial u_{i_2} \partial u_{i_1} } \Big |_{\bf u} < M_{\phi,i_1 i_2}, \;\; \forall {\bf u} \in \mathcal{I}.
\end{equation}

\noindent The notation $u_{k,1},u_{k,2},...,u_{k,n_u}$ is used to denote the different elements of ${\bf u}_k$.

As given, Conditions (\ref{eq:SCFO1})-(\ref{eq:SCFO7}) are not implementable numerically due to the strict inequalities in (\ref{eq:SCFO4})-(\ref{eq:SCFO6}) and the approximate equalities in (\ref{eq:SCFO4})-(\ref{eq:SCFO5}), which may thus be approximated by some numerical tolerances. Deferring this discussion to Section 4, we first give a qualitative overview of the different conditions and their \emph{raison d'{\^e}tre}.

\subsection{The Feasibility Conditions (\ref{eq:SCFO1})-(\ref{eq:SCFO3})}

Start by noting that Conditions (\ref{eq:SCFO2}) and (\ref{eq:SCFO3}) are simply the numerical and bound constraints of the main problem (\ref{eq:mainprob}). As both of these are easily checked for any ${\bf u}_{k+1}$, there is no need to transform them out of their original form as they are already tractable.

The same is not true for the experimental constraint functions $g_p$, for which the trivial sufficient condition, $g_{p,j} ({\bf u}_{k+1}) \leq 0$, is intractable as the function $g_{p,j}$ is unknown. One may sidestep this difficulty by employing the upper bound

\begin{equation}\label{eq:bound1U}
g_{p,j} ({\bf u}_{k+1}) \leq g_{p,j} ({\bf u}_k) + \displaystyle \sum_{i=1}^{n_u} \kappa_{p,ji} | u_{k+1,i} - u_{k,i} |,
\end{equation}

\noindent the derivation of which may be found in \cite{Bunin:Lip}. Clearly, enforcing the right-hand side of (\ref{eq:bound1U}) to be non-positive also enforces $g_{p,j} ({\bf u}_{k+1}) \leq 0$, and this then yields (\ref{eq:SCFO1}). While it may appear that we have ``cheated'' in replacing one intractable condition by another -- in principle, one cannot know the Lipschitz constants $\kappa_{p,ji}$ since one does not know $g_{p,j}$ -- the latter condition only requires having sufficiently conservative values of the Lipschitz constants while the former requires knowing the entire function. The reader is referred to \citep{Bunin:SCFOImp} for the different ways to estimate these constants in practice, and to \citep{SCFOug} and \citep{Bunin2013p} for examples of their successful use in implementation.

One should remark that whenever $g_{p,j} ({\bf u}_{k}) < 0$, there always exists a ${\bf u}_{k+1}$ sufficiently close to ${\bf u}_{k}$ so as to satisfy (\ref{eq:SCFO1}) with ${\bf u}_{k+1} \neq {\bf u}_k$. Also, since the bound (\ref{eq:bound1U}) holds with \emph{strict} inequality whenever ${\bf u}_{k+1} \neq {\bf u}_k$ \cite{Bunin:Lip}, satisfying (\ref{eq:SCFO1}) in turn implies $g_{p,j} ({\bf u}_{k+1}) < 0$. That strict feasibility for the experimental constraints is maintained is thus proven trivially by induction, starting with the base case at ${\bf u}_0$, with $g_{p,j} ({\bf u}_0) < 0$ following from Assumption A2.

\subsection{The Strict Monotonic Improvement Conditions (\ref{eq:SCFO6}) and (\ref{eq:SCFO7})}

Condition (\ref{eq:SCFO6}) is a standard local descent condition and enforces that the next experimental iterate lie in the strict descent halfspace of the cost function. In doing so, an improvement in the cost value is guaranteed provided that ${\bf u}_{k+1}$ is sufficiently close to ${\bf u}_k$. This is achieved with Condition (\ref{eq:SCFO7}), which follows from the upper bound

\begin{equation}\label{eq:bound2U}
\begin{array}{l}
\phi_p({\bf u}_{k+1}) - \phi_p({\bf u}_{k}) \leq \nabla \phi_p({\bf u}_{k})^T ({\bf u}_{k+1} - {\bf u}_{k}) + \vspace{1mm} \\
\hspace{10mm}\displaystyle \frac{1}{2} \sum_{i_1=1}^{n_u} \sum_{i_2=1}^{n_u} M_{\phi,i_1 i_2} | (u_{k+1,i_1} - u_{k,i_1})(u_{k+1,i_2} - u_{k,i_2}) |,
\end{array}
\end{equation}

\noindent where forcing the right-hand side to be non-positive implies $\phi_p({\bf u}_{k+1}) - \phi_p({\bf u}_{k}) < 0$ for ${\bf u}_{k+1} \neq {\bf u}_k$ \cite{Bunin:Lip}. This is attainable for a ${\bf u}_{k+1}$ sufficiently close to ${\bf u}_k$ since the linear term of the expression is forced to be strictly negative by (\ref{eq:SCFO6}) and overwhelms the quadratic term locally.

\subsection{The Projection Conditions (\ref{eq:SCFO3})-(\ref{eq:SCFO5})}

While enforcing both feasibility and monotonic improvement may appear to promise convergence to a point where no locally feasible, cost-descent direction exists (i.e., to a stationary point), it should be clear that something is missing. This ``something'' is the regulation of the implicit distance between ${\bf u}_{k+1}$ and ${\bf u}_k$, which, while needing to be sufficiently small to satisfy (\ref{eq:SCFO1})-(\ref{eq:SCFO3}) and (\ref{eq:SCFO7}), cannot become too small since this leads to $\phi_{p} ({\bf u}_{k+1}) - \phi_{p} ({\bf u}_k) \rightarrow 0$ and precludes global convergence to a stationary point. We give a simple illustration of an algorithm that satisfies Conditions (\ref{eq:SCFO1})-(\ref{eq:SCFO3}) and (\ref{eq:SCFO6})-(\ref{eq:SCFO7}) but fails to converge to a stationary point in Fig. \ref{fig:feasbreak}. Here, a gradient-descent algorithm continually takes steps in the (linear) cost-descent direction while maintaining feasibility. While the iterates ${\bf u}_0,...,{\bf u}_\infty$ may indeed be strictly monotonically decreasing in cost and remain strictly feasible, it is easily seen that the algorithm approaches ${\bf u}_\infty$ rather than the optimum ${\bf u}^*$. This is essentially due to the algorithm taking smaller and smaller steps as it approaches $g_{p,1}$, a behavior that is forced by Condition (\ref{eq:SCFO1}) and needed to guarantee that the constraint is not violated.

\begin{figure}
\begin{center}
\includegraphics[width=7cm]{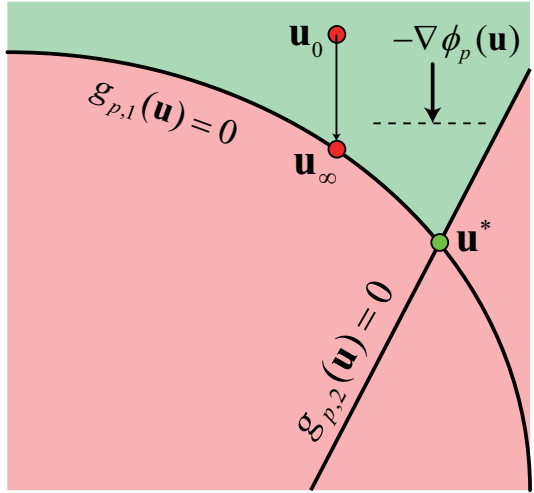}
\caption{Illustration of an algorithm that fails to converge to a stationary point of (\ref{eq:mainprob}) by virtue of not satisfying the projection conditions.}
\label{fig:feasbreak}
\end{center}
\end{figure}

This motivates the idea of projection and is analogous, albeit not identical, to what is often employed in numerical optimization schemes that project search points onto the feasible space \cite{Calamai1987,Conn2000,Wright2004}. In the context of experimental optimization and the conditions proposed here, the motivation for projecting is to stay sufficiently deep inside the feasible space so as to avoid the case where either Condition (\ref{eq:SCFO1}), (\ref{eq:SCFO2}), or (\ref{eq:SCFO3}) lead to ${\bf u}_{k+1} \rightarrow {\bf u}_k$ (as opposed to the numerical context, where projection is done to return infeasible iterates to the feasible region).

This is achieved with Conditions (\ref{eq:SCFO4}) and (\ref{eq:SCFO5}) for the experimental and numerical inequality constraints, respectively, where we force ${\bf u}_{k+1}$ to lie in the local descent halfspaces of any constraints that are close to active. The same is achieved by Condition (\ref{eq:SCFO3}), which does double duty both as a feasibility and a projection condition. This is easily seen if one considers one of the bounds to be active -- taking, e.g., $u_{k,1} = u^U_1$ leads to the condition $u_{k+1,1} - u_{k,1} \leq 0$, which is the projection condition seeing as the gradient of this constraint is a vector of zeros with the sole first element equal to 1. Applying projection to the example in Fig. \ref{fig:feasbreak}, one could visualize the experimental iterates ``sliding'' along $g_{p,1}$ once it became close to active, before eventually converging in the neighborhood of ${\bf u}^*$. 

\section{Basic Numerical Implementation of the Conditions}
\label{sec:implement}

As already mentioned, some level of approximation is needed to replace the strict inequalities and the approximate equalities in (\ref{eq:SCFO4})-(\ref{eq:SCFO6}) if one is to implement these conditions numerically. Letting ${\boldsymbol \epsilon}_{p}, {\boldsymbol \epsilon}, {\boldsymbol \delta}_{g_p}, {\boldsymbol \delta}_{g}, \delta_\phi \succ {\bf 0}$ denote the appropriate \emph{projection parameters}, with $\epsilon_{p,j}$, $\epsilon_{j}$, $\delta_{g_p,j}$, and $\delta_{g,j}$ denoting individual elements, we state the numerically implementable versions of (\ref{eq:SCFO4})-(\ref{eq:SCFO6}):

\begin{equation}\label{eq:SCFO4i}
\nabla g_{p,j} ({\bf u}_k)^T ({\bf u}_{k+1} - {\bf u}_k) \leq -\delta_{g_p,j}, \;\; \forall j: g_{p,j} ({\bf u}_k) \geq -\epsilon_{p,j},
\end{equation}

\begin{equation}\label{eq:SCFO5i}
\nabla g_{j} ({\bf u}_k)^T ({\bf u}_{k+1} - {\bf u}_k) \leq -\delta_{g,j}, \;\; \forall j: g_{j} ({\bf u}_k) \geq -\epsilon_{j},
\end{equation}

\begin{equation}\label{eq:SCFO6i}
\nabla \phi_{p} ({\bf u}_k)^T ({\bf u}_{k+1} - {\bf u}_k) \leq -\delta_\phi,
\end{equation}

At this point, one is faced with the question of how to find an experimental iterate ${\bf u}_{k+1}$ that would enforce the sufficient conditions. As all of the conditions are stated as inequality constraints on ${\bf u}_{k+1}$, the natural approach would be to project a point onto this inequality set:

\begin{equation}\label{eq:projgen}
\begin{array}{rrl}
{\bf u}_{k+1} := & {\rm arg}\mathop {{\rm{minimize}}}\limits_{{\bf u}} & \| {\bf u} - {\bf u}_{k+1}^* \|_2^2 \\
& {\rm{subject}}\hspace{1mm}{\rm{to}} & (\ref{eq:SCFO1}){\rm -}(\ref{eq:SCFO3}),(\ref{eq:SCFO4i}){\rm -}(\ref{eq:SCFO6i}),(\ref{eq:SCFO7}),
\end{array}
\end{equation} 

\noindent with the substitution ${\bf u}_{k+1} \rightarrow {\bf u}$ implicit in (\ref{eq:SCFO1})-(\ref{eq:SCFO3}),(\ref{eq:SCFO4i})-(\ref{eq:SCFO6i}), and (\ref{eq:SCFO7}). Here, we use ${\bf u}_{k+1}^*$ to denote a ``target'' that we believe would lead to better performance but may not necessarily satisfy the sufficient conditions. In practice, this target may be provided by any of the aforementioned approaches of Section \ref{sec:review} -- i.e., it is suggested that the implementation of the conditions be coupled with another experimental optimization method, although this is purely optional from the theoretical perspective, since any arbitrary ${\bf u}_{k+1}^*$ could be chosen.

While the approach of (\ref{eq:projgen}) is perfectly valid, we do not choose it as our preferred strategy for the reason that potential numerical issues could come up while solving (\ref{eq:projgen}). This is because neither Condition (\ref{eq:SCFO2}) nor Condition (\ref{eq:SCFO7}) are guaranteed to yield convex constraints in the projection problem -- specifically, (\ref{eq:SCFO2}) yields convex constraints only when all of the numerical constraints are convex, while the requirements for (\ref{eq:SCFO7}) to be convex are more involved. In the absence of such restrictions, reliably obtaining a feasible solution to (\ref{eq:projgen}) may be difficult to guarantee.

A less elegant but completely tractable approach to numerically implementing the conditions is that of \emph{projecting and filtering}, where the projection is only carried out with respect to Conditions (\ref{eq:SCFO3})-(\ref{eq:SCFO6}):

\begin{equation}\label{eq:proj}
\begin{array}{rll}
\bar {\bf u}_{k+1}^* := {\rm arg} \mathop {\rm minimize}\limits_{{\bf u}} & \| {\bf u} - {\bf u}_{k+1}^* \|_2^2 & \\
{\rm{subject}}\;{\rm{to}} & \nabla g_{p,j} ({\bf u}_k)^T ({\bf u} - {\bf u}_k) \leq -\delta_{g_p,j}, & \hspace{-2mm} \forall j : g_{p,j}({\bf u}_k) \geq -\epsilon_{p,j} \\
 & \nabla g_{j} ({\bf u}_k)^T ({\bf u} - {\bf u}_k) \leq -\delta_{g,j}, & \hspace{-2mm} \forall j : g_{j}({\bf u}_k) \geq -\epsilon_{j} \\
 & \nabla \phi_{p} ({\bf u}_k)^T ({\bf u} - {\bf u}_k) \leq -\delta_{\phi} & \\
 & {\bf u}^L \preceq {\bf u} \preceq {\bf u}^U &
\end{array}
\end{equation}

\noindent to yield the \emph{projected target} $\bar {\bf u}_{k+1}^*$. This target is then filtered to give the new input point using the law

\begin{equation}\label{eq:inputfilter}
{\bf u}_{k+1} := {\bf u}_k + K_k \left( \bar {\bf u}_{k+1}^* - {\bf u}_{k} \right),
\end{equation}

\noindent where $K_k \in [0,1]$ denotes the filter gain. Such an implementation essentially allows for Conditions (\ref{eq:SCFO1}), (\ref{eq:SCFO2}), and (\ref{eq:SCFO7}) to be remplaced -- by simply substituting (\ref{eq:inputfilter}) for ${\bf u}_{k+1}$ in these expressions -- with conditions on $K_k$:

\begin{equation}\label{eq:SCFO1i}
g_{p,j}({\bf u}_k) + K_k \displaystyle \mathop {\sum} \limits_{i=1}^{n_u} \kappa_{p,ji}|\bar u_{k+1,i}^* - u_{k,i}| \leq 0, \;\; \forall j = 1,...,n_{g_p},
\end{equation}

\begin{equation}\label{eq:SCFO2i}
g_{j}({\bf u}_k + K_k ( \bar {\bf u}_{k+1}^* - {\bf u}_{k} )) \leq 0, \;\; \forall j = 1,...,n_g,
\end{equation}

\begin{equation}\label{eq:SCFO7i}
\begin{array}{l}
\nabla \phi_p ({\bf u}_k)^T (\bar {\bf u}_{k+1}^* - {\bf u}_k) \\
\hspace{10mm}\displaystyle+ \frac{K_k}{2} \mathop {\sum} \limits_{i_1=1}^{n_u} \mathop {\sum} \limits_{i_2=1}^{n_u} M_{\phi,i_1 i_2} | (\bar u_{k+1,i_1}^* - u_{k,i_1})(\bar u_{k+1,i_2}^* - u_{k,i_2}) | \leq 0,
\end{array}
\end{equation}

\noindent for which the value of $K_k$ is maximized subject to (\ref{eq:SCFO1i}), (\ref{eq:SCFO2i}), and (\ref{eq:SCFO7i}) by carrying out a line search on $K_k \in [0,1]$ -- maximization being the natural choice since larger adaptation steps are usually more favorable and lead to faster convergence/improvement. The intuitive appeal of the project-and-filter approach is that (\ref{eq:proj}) points us in the appropriate locally feasible descent direction while the line search then decides how far in that direction to step. Note that the final ${\bf u}_{k+1}$ obtained in this manner, while not necessarily satisfying (\ref{eq:SCFO4i})-(\ref{eq:SCFO6i}) due to the filtering, will nevertheless satisfy the original conditions (\ref{eq:SCFO4})-(\ref{eq:SCFO6}) due to the convexity of this set and the fact that $K_k \in [0,1]$. The practical appeal of the line search is that it can handle all sorts of nonconvexity in (\ref{eq:SCFO2i}) and (\ref{eq:SCFO7i}).

For the case when Projection (\ref{eq:proj}) is infeasible, we simply set ${\bf u}_{k+1} := {\bf u}_k$. which effectively terminates the algorithm due to the projection again being infeasible at ${\bf u}_{k+1}$, ${\bf u}_{k+2}$, and so on.

\section{Proof of Feasible-Side Global Convergence}
\label{sec:proof}

It should already be clear that the project-and-filter approach will not generate any experimental points ${\bf u}_{k+1}$ that violate the problem constraints, since one starts with a feasible point (Assumption A2) and may always choose $K_k$ sufficiently small to preserve feasibility. However, the global convergence properties of this scheme remain somewhat nontrivial. We now proceed to prove that the scheme, for a fixed choice of ${\boldsymbol \epsilon}_{p}, {\boldsymbol \epsilon}, {\boldsymbol \delta}_{g_p}, {\boldsymbol \delta}_{g}, \delta_\phi \succ {\bf 0}$, generates a set of experimental iterates that preserve feasibility and decrease monotonically in cost function value prior to converging to some point in a finite number of iterations due to the projection (\ref{eq:proj}) becoming infeasible. We then show that the point where such infeasibility occurs approaches an FJ point as ${\boldsymbol \epsilon}_{p}, {\boldsymbol \epsilon}, {\boldsymbol \delta}_{g_p}, {\boldsymbol \delta}_{g}, \delta_\phi \downarrow {\bf 0}$.

\subsection{Guarantee of a Minimal, Strictly Positive Filter Gain $K_k$}

We will start by supposing that the projection (\ref{eq:proj}) has been successful and that a feasible solution has been found for all experimental iterations from 0 to $k$, and will analyze the behavior of $K_k$ when it is set as the maximum value on the interval $[0,1]$ that satisfies Conditions (\ref{eq:SCFO1i}), (\ref{eq:SCFO2i}), and (\ref{eq:SCFO7i}). Our goal with this analysis is to show that $K_k$ \emph{must} stay above a strictly positive minimum value that may be expressed as a function of the problem characteristics (the Lipschitz constants, the size of $\mathcal{I}$), the implementation settings (the projection parameters), and the proximity of the initial point to the constraints. Doing so will allow us to guarantee that the project-and-filter approach does not converge prematurely due to a vanishing $K_k$ \cite{Bunin2011}.

Prior to proving this result, we will first require the following definitions.

\begin{definition}[The strictness coefficient for the Lipschitz constants of the experimental constraint functions]
\label{def:strictness}
Denote by $\tilde \kappa_{p,ji}$ the \emph{nonstrict} Lipschitz constants for the constraint function $g_{p,j}$:

\begin{equation}\label{eq:lipconns}
- \tilde \kappa_{p,ji} \leq \frac{\partial g_{p,j}}{\partial u_i} \Big |_{\bf u} \leq \tilde \kappa_{p,ji}, \;\; \forall {\bf u} \in \mathcal{I}.
\end{equation}

The strictness coefficient for the Lipschitz constants of $g_{p,j}$ is then defined as

\begin{equation}\label{eq:gamma}
\gamma_j = \mathop {\max} \limits_{i = 1,...,n_u} \frac{\tilde \kappa_{p,ji}}{\kappa_{p,ji}} < 1.
\end{equation}

\end{definition}

\begin{definition}[Upper bounds on the worst-case linear growth of the experimental and numerical constraint functions]
\label{def:worstlin}
Let $\overline L_{p,j}$ and $\overline L_{j}$ denote the following upper bounds on the worst-case linear growth terms for the experimental and numerical constraint functions, respectively:

\begin{equation}\label{eq:Kupper}
\overline L_{p,j} = \mathop {\sum} \limits_{i=1}^{n_u} \kappa_{p,ji}(u^U_i - u^L_i) \geq \mathop {\sum} \limits_{i=1}^{n_u} \kappa_{p,ji}|\bar u_{k+1,i}^* - u_{k,i}|,
\end{equation}

\begin{equation}\label{eq:Kupper2}
\overline L_j = \mathop {\sum} \limits_{i=1}^{n_u} \kappa_{ji}(u^U_i - u^L_i) \geq \mathop {\sum} \limits_{i=1}^{n_u} \kappa_{ji}|\bar u_{k+1,i}^* - u_{k,i}|,
\end{equation}

\noindent with $\kappa_{ji}$ defined in the same manner as $\kappa_{p,ji}$ in (\ref{eq:lipcon}):

\begin{equation}\label{eq:lipconnum}
- \kappa_{ji} < \frac{\partial g_{j}}{\partial u_i} \Big |_{\bf u} < \kappa_{ji}, \;\; \forall {\bf u} \in \mathcal{I}.
\end{equation}

\end{definition}

\begin{definition}[Upper bounds on the worst-case quadratic growth of the cost and constraint functions]
\label{def:worstquad}
Let $\overline Q_{\phi}$, $\overline Q_{j}$, and $\overline Q_{p,j}$ denote the following upper bounds on the worst-case quadratic growth terms:

\begin{equation}\label{eq:Mupper1}
\begin{array}{l}
\overline Q_\phi = \displaystyle \mathop {\sum} \limits_{i_1=1}^{n_u} \mathop {\sum} \limits_{i_2=1}^{n_u} M_{\phi,i_1 i_2} (u^U_{i_1} - u^L_{i_1})(u^U_{i_2} - u^L_{i_2}) \geq \\
\hspace{20mm} \displaystyle \mathop {\sum} \limits_{i_1=1}^{n_u} \mathop {\sum} \limits_{i_2=1}^{n_u} M_{\phi,i_1 i_2} | (\bar u_{k+1,i_1}^* - u_{k,i_1})(\bar u_{k+1,i_2}^* - u_{k,i_2}) |,
\end{array}
\end{equation}

\begin{equation}\label{eq:Mupper2}
\begin{array}{l}
\overline Q_j = \displaystyle \mathop {\sum} \limits_{i_1=1}^{n_u} \mathop {\sum} \limits_{i_2=1}^{n_u} M_{g_j,i_1 i_2} (u^U_{i_1} - u^L_{i_1})(u^U_{i_2} - u^L_{i_2}) \geq \\
\hspace{20mm} \displaystyle \mathop {\sum} \limits_{i_1=1}^{n_u} \mathop {\sum} \limits_{i_2=1}^{n_u} M_{g_j,i_1 i_2} | (\bar u_{k+1,i_1}^* - u_{k,i_1})(\bar u_{k+1,i_2}^* - u_{k,i_2}) |,
\end{array}
\end{equation}

\begin{equation}\label{eq:Mupper3}
\begin{array}{l}
\overline Q_{p,j} = \displaystyle \mathop {\sum} \limits_{i_1=1}^{n_u} \mathop {\sum} \limits_{i_2=1}^{n_u} M_{g_{p,j},i_1 i_2} (u^U_{i_1} - u^L_{i_1})(u^U_{i_2} - u^L_{i_2}) \geq \\
\hspace{20mm} \displaystyle \mathop {\sum} \limits_{i_1=1}^{n_u} \mathop {\sum} \limits_{i_2=1}^{n_u} M_{g_{p,j},i_1 i_2} | (\bar u_{k+1,i_1}^* - u_{k,i_1})(\bar u_{k+1,i_2}^* - u_{k,i_2}) |,
\end{array}
\end{equation}

\noindent with $M_{g_j,i_1 i_2}$ and $M_{g_{p,j},i_1 i_2}$ defined in the same manner as $M_{\phi,i_1 i_2}$ in (\ref{eq:lipcon2}):

\begin{equation}\label{eq:lipcongknown2}
-M_{g_j,i_1 i_2} < \frac{\partial^2 g_j}{\partial u_{i_2} \partial u_{i_1} } \Big |_{\bf u} < M_{g_j,i_1 i_2}, \;\; \forall {\bf u} \in \mathcal{I},
\end{equation}

\begin{equation}\label{eq:lipcongp2}
-M_{g_{p,j},i_1 i_2} < \frac{\partial^2 g_{p,j}}{\partial u_{i_2} \partial u_{i_1} } \Big |_{\bf u} < M_{g_{p,j},i_1 i_2}, \;\; \forall {\bf u} \in \mathcal{I}.
\end{equation}

\end{definition}

We now proceed to derive the lower bound on $K_k$ by considering the limitations of (\ref{eq:SCFO1i}), (\ref{eq:SCFO2i}), and (\ref{eq:SCFO7i}) individually.

\begin{theorem}[Sufficiently low filter gain value with respect to Condition (\ref{eq:SCFO7i})]
\label{thm:Kbound1}
Let the project-and-filter approach of (\ref{eq:proj}) and (\ref{eq:inputfilter}) be applied with the assumption that the projection has been feasible for all experimental iterations $0,...,k$. It follows that

\begin{equation}\label{eq:lowerKboundA}
K_k \in \left[  0, 2 \frac{\delta_\phi}{\overline Q_\phi} \right] \Rightarrow (\ref{eq:SCFO7i}).
\end{equation}

\end{theorem}
\begin{proof} Rearranging the inequality (\ref{eq:SCFO7i}), we obtain

\begin{equation}\label{eq:SCFO7iRE}
K_k \leq -2 \frac{\nabla \phi_p ({\bf u}_k)^T (\bar {\bf u}_{k+1}^* - {\bf u}_k)}{\mathop {\sum} \limits_{i_1=1}^{n_u} \mathop {\sum} \limits_{i_2=1}^{n_u} M_{\phi,i_1 i_2} | (\bar u_{k+1,i_1}^* - u_{k,i_1})(\bar u_{k+1,i_2}^* - u_{k,i_2}) |}.
\end{equation}

In finding the minimum value that this bound may take, note that the numerator must be strictly negative due to the feasibility of the projection, and that the expression on the right-hand side may thus be minimized by minimizing $-\nabla \phi_p ({\bf u}_k)^T (\bar {\bf u}_{k+1}^* - {\bf u}_k)$ and maximizing $\mathop {\sum} \limits_{i_1=1}^{n_u} \mathop {\sum} \limits_{i_2=1}^{n_u} M_{\phi,i_1 i_2} | (\bar u_{k+1,i_1}^* - u_{k,i_1})(\bar u_{k+1,i_2}^* - u_{k,i_2}) |$. For the former, one has the lower bound of $\delta_\phi$, which is guaranteed by the projection, while a sufficient upper bound on the latter is given by (\ref{eq:Mupper1}). It thus follows that

\begin{equation}\label{eq:Klowerphi}
K_k \leq 2 \frac{\delta_\phi}{\overline Q_\phi} \Rightarrow (\ref{eq:SCFO7i}),
\end{equation}

\noindent which implies the desired result. \end{proof}

\begin{theorem}[Sufficiently low filter gain value with respect to Condition (\ref{eq:SCFO2i})]
\label{thm:Kbound2}
Let the project-and-filter approach of (\ref{eq:proj}) and (\ref{eq:inputfilter}) be applied with the assumption that the projection has been feasible for all experimental iterations $0,...,k$. It follows that

\begin{equation}\label{eq:lowerKboundB}
K_k \in \left[  0, \mathop {\min} \limits_{j=1,...,n_g} \mathop {\min} \left[ \frac{\epsilon_j}{\overline L_j}, 2\frac{\delta_{g,j}}{\overline Q_j} \right] \right] \Rightarrow (\ref{eq:SCFO2i}).
\end{equation}

\end{theorem}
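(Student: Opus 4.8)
The structure of this theorem mirrors Theorem \ref{thm:Kbound1} exactly, but now applied to the numerical inequality constraints $g_j$ rather than the cost function. I expect the proof to have two parallel branches, one for each of the two quantities in the $\min$, because Condition (\ref{eq:SCFO2i}) is an inequality on the actual constraint value $g_j({\bf u}_k + K_k(\bar{\bf u}_{k+1}^* - {\bf u}_k))$, and bounding this requires handling both the linear behavior near an active constraint and the quadratic correction term. My first step would be to expand $g_j$ along the filtered step using a second-order Taylor bound, analogous to (\ref{eq:bound2U}): writing
\begin{equation*}
g_j({\bf u}_k + K_k(\bar{\bf u}_{k+1}^* - {\bf u}_k)) \leq g_j({\bf u}_k) + K_k \nabla g_j({\bf u}_k)^T(\bar{\bf u}_{k+1}^* - {\bf u}_k) + \frac{K_k^2}{2}\sum_{i_1}\sum_{i_2} M_{g_j,i_1 i_2}|(\bar u_{k+1,i_1}^* - u_{k,i_1})(\bar u_{k+1,i_2}^* - u_{k,i_2})|.
\end{equation*}
The goal is then to show that the right-hand side is non-positive whenever $K_k$ lies in the stated interval.

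\textbf{Splitting into the active and inactive cases.} The natural case split is on whether constraint $j$ is close to active, i.e. whether $g_j({\bf u}_k) \geq -\epsilon_j$. In the \emph{active} case, the projection (\ref{eq:proj}) guarantees $\nabla g_j({\bf u}_k)^T(\bar{\bf u}_{k+1}^* - {\bf u}_k) \leq -\delta_{g,j}$, so the linear term is strictly negative and can dominate; here the binding requirement will come from the quadratic term, giving the bound $K_k \leq 2\delta_{g,j}/\overline Q_j$ via (\ref{eq:Mupper2}), in direct analogy to the rearrangement in (\ref{eq:SCFO7iRE})–(\ref{eq:Klowerphi}). In the \emph{inactive} case, where $g_j({\bf u}_k) < -\epsilon_j$, no descent is enforced by the projection, so the worst case is that the linear term grows as fast as possible; I would bound the growth crudely using $|\nabla g_j({\bf u}_k)^T(\bar{\bf u}_{k+1}^* - {\bf u}_k)| \leq \sum_i \kappa_{ji}|\bar u_{k+1,i}^* - u_{k,i}| \leq \overline L_j$ from (\ref{eq:Kupper2}), and argue that the initial margin $\epsilon_j$ absorbs this growth provided $K_k \overline L_j \leq \epsilon_j$, yielding $K_k \leq \epsilon_j/\overline L_j$. (The quadratic term in this subcase is handled by the same $2\delta_{g,j}/\overline Q_j$ bound or is dominated once the margin is respected.)

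\textbf{Combining and the main obstacle.} Since both bounds must hold simultaneously across all $j$ and across both cases, taking the minimum over $j$ of $\min[\epsilon_j/\overline L_j,\, 2\delta_{g,j}/\overline Q_j]$ guarantees (\ref{eq:SCFO2i}), which is exactly (\ref{eq:lowerKboundB}). The step I expect to be most delicate is the \emph{inactive} case: one must verify that the $\epsilon_j$-margin together with the linear growth bound $\overline L_j$ genuinely controls $g_j$ without the quadratic term spoiling non-positivity, and in particular that the two ceilings $\epsilon_j/\overline L_j$ and $2\delta_{g,j}/\overline Q_j$ are the right pair to intersect rather than, say, needing a combined bound that accounts for linear and quadratic growth together. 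I would want to check carefully that, when $g_j({\bf u}_k)$ sits just below $-\epsilon_j$ and the step moves toward the constraint, the first-order increase $K_k \overline L_j \leq \epsilon_j$ keeps $g_j({\bf u}_k) + K_k\nabla g_j(\cdot)^T(\cdot) \leq 0$ with enough slack to absorb the quadratic term under the separate constraint $K_k \leq 2\delta_{g,j}/\overline Q_j$; reconciling these two independent requirements into the single clean $\min$ is where the argument needs the most care.
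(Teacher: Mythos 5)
Your overall architecture is right: the case split on whether $g_j({\bf u}_k) \geq -\epsilon_j$, the ceiling $2\delta_{g,j}/\overline Q_j$ coming from the projection's descent guarantee in the $\epsilon$-active case, and the ceiling $\epsilon_j/\overline L_j$ coming from the margin in the inactive case all match the paper. Your treatment of the active case is essentially the paper's: since $g_j({\bf u}_k) \leq 0$ that term is simply dropped, the remaining linear-plus-quadratic expression is divided by $K_k$, and the rearrangement analogous to (\ref{eq:SCFO7iRE}) gives $K_k \leq 2\delta_{g,j}/\overline Q_j$ via (\ref{eq:Mupper2}).

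The genuine gap is exactly where you flagged your own discomfort: the inactive case under your setup does not close. Starting from a single second-order expansion, you would need
\begin{equation*}
g_j({\bf u}_k) + K_k \overline L_j + \tfrac{K_k^2}{2}\,\overline Q_j \leq 0
\end{equation*}
in the worst case, and with $g_j({\bf u}_k)$ just below $-\epsilon_j$ the choice $K_k \leq \epsilon_j/\overline L_j$ only cancels the first two terms, leaving a strictly positive quadratic residual that the unrelated ceiling $2\delta_{g,j}/\overline Q_j$ cannot absorb (that ceiling trades the quadratic term against $\delta_{g,j}$, which plays no role when the constraint is inactive). So your parenthetical hope that the quadratic term ``is handled by the same $2\delta_{g,j}/\overline Q_j$ bound or is dominated once the margin is respected'' is false as stated, and a combined linear-plus-quadratic bound would indeed be needed — which is not what the theorem asserts. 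The missing idea is that the paper uses \emph{two different majorants of $g_j({\bf u}_{k+1})$, one per case}: in the inactive case it invokes the purely first-order Lipschitz bound (\ref{eq:boundgknownlin}), $g_j({\bf u}_{k+1}) \leq g_j({\bf u}_k) + K_k\sum_i \kappa_{ji}|\bar u^*_{k+1,i} - u_{k,i}|$, which carries no quadratic remainder at all, so $K_k \leq \epsilon_j/\overline L_j$ alone suffices; the second-order bound (\ref{eq:boundgknown}) is reserved for the active case, where $K_k \leq 2\delta_{g,j}/\overline Q_j$ alone suffices. Each ceiling fully handles its own case, and the outer $\min$ is just the conservative intersection over cases and over $j$ — the two requirements are never needed jointly within a single case.
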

\begin{proof} The expression (\ref{eq:SCFO2i}) cannot be inverted to obtain an inequality for $K_k$ in the general case. One may, however, use the Lipschitz bounds

\begin{equation}\label{eq:boundgknownlin}
g_{j} ({\bf u}_{k+1}) \leq g_{j} ({\bf u}_k) + \displaystyle \sum_{i=1}^{n_u} \kappa_{ji} | u_{k+1,i} - u_{k,i} |,
\end{equation}

\begin{equation}\label{eq:boundgknown}
\begin{array}{l}
g_j ({\bf u}_{k+1}) \leq g_j ({\bf u}_{k}) + \nabla g_j ({\bf u}_{k})^T ({\bf u}_{k+1} - {\bf u}_{k}) + \\
\hspace{20mm}\displaystyle \frac{1}{2} \sum_{i_1=1}^{n_u} \sum_{i_2=1}^{n_u} M_{g_j,i_1 i_2} | (u_{k+1,i_1} - u_{k,i_1})(u_{k+1,i_2} - u_{k,i_2}) |,
\end{array}
\end{equation}

\noindent and substitute in the filter expression (\ref{eq:inputfilter}) to obtain

\begin{equation}\label{eq:boundgknownlin2}
g_{j} ({\bf u}_{k+1}) \leq g_{j} ({\bf u}_k) + K_k \displaystyle \sum_{i=1}^{n_u} \kappa_{ji} | \bar u_{k+1,i}^* - u_{k,i} |,
\end{equation}

\begin{equation}\label{eq:boundgknown2}
\begin{array}{l}
g_j ({\bf u}_{k+1}) \leq g_j ({\bf u}_{k}) + K_k \nabla g_j ({\bf u}_{k})^T (\bar {\bf u}_{k+1}^* - {\bf u}_{k}) + \\
\hspace{20mm}\displaystyle \frac{K_k^2}{2} \sum_{i_1=1}^{n_u} \sum_{i_2=1}^{n_u} M_{g_j,i_1 i_2} | (\bar u_{k+1,i_1}^* - u_{k,i_1})(\bar u_{k+1,i_2}^* - u_{k,i_2}) |.
\end{array}
\end{equation}

Consider first the case where $g_j ({\bf u}_k) < -\epsilon_j$. Forcing the right-hand side of (\ref{eq:boundgknownlin2}) to be non-positive and rearranging provides the following sufficient condition to ensure that $g_j ({\bf u}_{k+1}) < 0$ for this case:

\begin{equation}\label{eq:Kboundnum1}
K_k \leq \frac{-g_j ({\bf u}_k)}{\displaystyle \sum_{i=1}^{n_u} \kappa_{ji} | \bar u_{k+1,i}^* - u_{k,i} |}.
\end{equation}

\noindent This bound may be made global by minimizing the numerator (bounded from below by $\epsilon_j$) and maximizing the denominator (bounded from above by $\overline L_{j}$), thus leading to the following implication:

\begin{equation}\label{eq:case1num}
g_j ({\bf u}_k) < -\epsilon_j \wedge K_k \leq \frac{\epsilon_j}{\overline L_j} \Rightarrow g_j ({\bf u}_{k+1}) < 0.
\end{equation}

Considering the alternative where $g_j ({\bf u}_k) \geq -\epsilon_j$, we may employ (\ref{eq:boundgknown2}), first noting that, since $g_j ({\bf u}_k) \leq 0$, the following must hold as well:

\begin{equation}\label{eq:boundgknown3}
\begin{array}{l}
g_j ({\bf u}_{k+1}) \leq K_k \nabla g_j ({\bf u}_{k})^T (\bar {\bf u}_{k+1}^* - {\bf u}_{k}) + \\
\hspace{20mm}\displaystyle \frac{K_k^2}{2} \sum_{i_1=1}^{n_u} \sum_{i_2=1}^{n_u} M_{g_j,i_1 i_2} | (\bar u_{k+1,i_1}^* - u_{k,i_1})(\bar u_{k+1,i_2}^* - u_{k,i_2}) |.
\end{array}
\end{equation}

Recalling that $K_k$ is restricted to be non-negative, we note that the particular case of $K_k = 0$ trivially implies $g_j ({\bf u}_{k+1}) \leq 0$. Suppose then that $K_k > 0$ and set the right-hand side to be non-positive. Dividing by $K_k$ then provides the sufficient condition to ensure that $g_j ({\bf u}_{k+1}) \leq 0$:

\begin{equation}\label{eq:boundgknown4}
\begin{array}{l}
\nabla g_j ({\bf u}_{k})^T (\bar {\bf u}_{k+1}^* - {\bf u}_{k}) + \vspace{1mm}\\
\hspace{20mm}\displaystyle \frac{K_k}{2} \sum_{i_1=1}^{n_u} \sum_{i_2=1}^{n_u} M_{g_j,i_1 i_2} | (\bar u_{k+1,i_1}^* - u_{k,i_1})(\bar u_{k+1,i_2}^* - u_{k,i_2}) | \leq 0,
\end{array}
\end{equation}

\noindent which may be rearranged to yield

\begin{equation}\label{eq:SCFO2iRE}
K_k \leq -2 \frac{\nabla g_j ({\bf u}_k)^T (\bar {\bf u}_{k+1}^* - {\bf u}_k)}{\mathop {\sum} \limits_{i_1=1}^{n_u} \mathop {\sum} \limits_{i_2=1}^{n_u} M_{g_j,i_1 i_2} | (\bar u_{k+1,i_1}^* - u_{k,i_1})(\bar u_{k+1,i_2}^* - u_{k,i_2}) |}.
\end{equation}

Using, again, the feasibility of the projection, which bounds the numerator, and the bound (\ref{eq:Mupper2}), which bounds the denominator, we obtain the global implication

\begin{equation}\label{eq:case2num}
g_j ({\bf u}_k) \geq -\epsilon_j \wedge K_k \leq 2 \frac{\delta_{g,j}}{\overline Q_j} \Rightarrow g_j ({\bf u}_{k+1}) \leq 0.
\end{equation}

Taking both cases into consideration and taking the minimum over $j = 1,...,n_g$ to account for all of the $n_g$ constraints then leads to the desired result. \end{proof}

\begin{theorem}[Sufficiently low filter gain value with respect to Condition (\ref{eq:SCFO1i})]
\label{thm:Kbound3}
Let the project-and-filter approach of (\ref{eq:proj}) and (\ref{eq:inputfilter}) be applied with the assumption that the projection has been feasible for all experimental iterations $0,...,k$. It follows that

\begin{equation}\label{eq:lowerKboundC}
K_k \in \left[  0, \mathop {\min} \limits_{j=1,...,n_{g_p}} \frac{\mathop {\min} \left[ (1-\gamma_j)\epsilon_{p,j},  \displaystyle  2 (1-\gamma_j) \frac{ \delta_{g_p,j}^2}{\overline Q_{p,j}}, -g_{p,j} ({\bf u}_0) \right]}{\overline L_{p,j}} \right] \Rightarrow (\ref{eq:SCFO1i}).
\end{equation}

\end{theorem}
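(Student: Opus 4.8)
The plan is to turn Condition (\ref{eq:SCFO1i}) into an explicit upper bound on $K_k$ and then show that this bound can never fall below the right-hand side of (\ref{eq:lowerKboundC}). Since $\sum_{i=1}^{n_u}\kappa_{p,ji}|\bar u_{k+1,i}^* - u_{k,i}| > 0$ whenever the projected target differs from ${\bf u}_k$, Condition (\ref{eq:SCFO1i}) is equivalent to $K_k \leq -g_{p,j}({\bf u}_k)/\sum_{i}\kappa_{p,ji}|\bar u_{k+1,i}^* - u_{k,i}|$, and replacing the denominator by its worst-case value $\overline L_{p,j}$ from (\ref{eq:Kupper}) gives the sufficient condition $K_k \leq -g_{p,j}({\bf u}_k)/\overline L_{p,j}$. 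The theorem therefore reduces to proving the uniform lower bound $-g_{p,j}({\bf u}_k) \geq B_j := \min[(1-\gamma_j)\epsilon_{p,j},\, 2(1-\gamma_j)\delta_{g_p,j}^2/\overline Q_{p,j},\, -g_{p,j}({\bf u}_0)]$ for every $j$ and every iteration $k$ reached by the scheme; taking the minimum over $j$ then yields (\ref{eq:lowerKboundC}).

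I would establish this lower bound by induction on $k$. The base case $k=0$ is immediate, since $B_j \leq -g_{p,j}({\bf u}_0)$ by construction. For the inductive step I would substitute the filter law (\ref{eq:inputfilter}), ${\bf u}_{k+1} - {\bf u}_k = K_k(\bar{\bf u}_{k+1}^* - {\bf u}_k)$, into the Lipschitz bounds for $g_{p,j}$ and split on whether the constraint is far from or close to active. The decisive algebraic device is the strictness coefficient (\ref{eq:gamma}): because the true univariate slopes obey the nonstrict bounds (\ref{eq:lipconns}) with $\tilde\kappa_{p,ji} \leq \gamma_j\kappa_{p,ji}$, whereas Condition (\ref{eq:SCFO1i}) is written with the strictly larger $\kappa_{p,ji}$, the actual first-order growth of $g_{p,j}$ consumes only a fraction $\gamma_j$ of the budget that (\ref{eq:SCFO1i}) reserves, leaving a slack of $(1-\gamma_j)$ that is exactly what appears in each term of $B_j$.

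Concretely, when $g_{p,j}({\bf u}_k) < -\epsilon_{p,j}$ I would use the linear (nonstrict) Lipschitz bound, which gives $g_{p,j}({\bf u}_{k+1}) \leq g_{p,j}({\bf u}_k) + \gamma_j K_k \overline L_{p,j}$; feeding in $K_k \leq B_j/\overline L_{p,j} \leq (1-\gamma_j)\epsilon_{p,j}/\overline L_{p,j}$ keeps $g_{p,j}({\bf u}_{k+1})$ below $-\epsilon_{p,j}(1-\gamma_j+\gamma_j^2) \leq -(1-\gamma_j)\epsilon_{p,j} \leq -B_j$, using only $\gamma_j^2 \geq 0$. When $g_{p,j}({\bf u}_k) \geq -\epsilon_{p,j}$, the projection (\ref{eq:proj}) is active on constraint $j$ and enforces the descent (\ref{eq:SCFO4i}), $\nabla g_{p,j}({\bf u}_k)^T(\bar{\bf u}_{k+1}^* - {\bf u}_k) \leq -\delta_{g_p,j}$; the second-order Lipschitz bound with $\overline Q_{p,j}$ from (\ref{eq:Mupper3}) then yields $g_{p,j}({\bf u}_{k+1}) \leq g_{p,j}({\bf u}_k) - K_k\delta_{g_p,j} + \tfrac{1}{2}K_k^2\overline Q_{p,j}$, and I would show the change $-K_k\delta_{g_p,j} + \tfrac{1}{2}K_k^2\overline Q_{p,j}$ is non-positive whenever $K_k \leq 2\delta_{g_p,j}/\overline Q_{p,j}$. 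This is guaranteed by the term $K_k \leq 2(1-\gamma_j)\delta_{g_p,j}^2/(\overline Q_{p,j}\overline L_{p,j})$, because feasibility of the projection forces $\delta_{g_p,j} \leq \gamma_j\overline L_{p,j}$ and hence $(1-\gamma_j)\delta_{g_p,j} \leq \overline L_{p,j}$. In both regimes the inductive hypothesis $g_{p,j}({\bf u}_k) \leq -B_j$ is thereby propagated to $k+1$.

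The main obstacle I anticipate is the close-to-active regime: one must certify that the curvature term $\tfrac{1}{2}K_k^2\overline Q_{p,j}$ cannot overwhelm the guaranteed first-order descent $-K_k\delta_{g_p,j}$ and drive the constraint back toward zero, and this is precisely where the \emph{quadratic} rather than merely linear structure of $B_j$ (the $\delta_{g_p,j}^2/\overline Q_{p,j}$ term) and the inequality $\delta_{g_p,j} \leq \gamma_j\overline L_{p,j}$ implied by projection feasibility must be invoked. A secondary point requiring care is the coupling of the induction with the filter: the lower bound on $-g_{p,j}({\bf u}_k)$ that makes (\ref{eq:SCFO1i}) attainable at step $k$ is itself maintained only because the admissible gains at all earlier steps lay within the very interval the theorem certifies, so the strict-feasibility invariant and the quantitative lower-bound invariant must be carried through the induction simultaneously.
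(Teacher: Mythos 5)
Your reduction of the theorem to the uniform lower bound $-g_{p,j}({\bf u}_k) \geq B_j$ (after rearranging (\ref{eq:SCFO1i}) and replacing the denominator by $\overline L_{p,j}$) is exactly the paper's strategy, and you have correctly identified the two mechanisms behind the three terms of $B_j$: the $(1-\gamma_j)$ slack between the strict and nonstrict Lipschitz constants, and the quadratic descent driven by the projection's condition (\ref{eq:SCFO4i}). The induction as you set it up is circular, however. In both regimes you ``feed in'' $K_k \leq B_j/\overline L_{p,j}$ (respectively $K_k \leq 2(1-\gamma_j)\delta_{g_p,j}^2/(\overline Q_{p,j}\overline L_{p,j})$), i.e., you assume the gain actually used at step $k$ lies in the very interval the theorem is certifying. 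But the algorithm chooses $K_k$ as the \emph{maximum} value on $[0,1]$ satisfying (\ref{eq:SCFO1i}), (\ref{eq:SCFO2i}), and (\ref{eq:SCFO7i}), which early on will typically far exceed $B_j/\overline L_{p,j}$; your closing remark that ``the admissible gains at all earlier steps lay within the very interval the theorem certifies'' is precisely the false premise. The certified interval is a sufficient subset of the admissible gains, not a superset of the gains actually taken, so the invariant you propagate applies to a hypothetical slow trajectory rather than to the iterates the scheme produces.

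The repair --- and what the paper does --- is to bound the actual trajectory with no smallness assumption on $K$. First, for \emph{any} gain satisfying (\ref{eq:SCFO1i}), Condition (\ref{eq:SCFO1}) itself gives $\sum_i \kappa_{p,ji}|u_{k,i}-u_{k-1,i}| \leq -g_{p,j}({\bf u}_{k-1})$, which combined with the nonstrict Lipschitz bound yields the contraction $-g_{p,j}({\bf u}_{k}) \geq (1-\gamma_j)\left(-g_{p,j}({\bf u}_{k-1})\right)$ regardless of $K_{k-1}$; this handles every case in which $-g_{p,j}({\bf u}_{k-1})$ exceeds $\epsilon_{p,j}$ or $2\delta_{g_p,j}^2/\overline Q_{p,j}$ --- including the $\epsilon$-active case with $-g_{p,j}({\bf u}_{k-1}) \in (2\delta_{g_p,j}^2/\overline Q_{p,j},\, \epsilon_{p,j}]$, where your quadratic-descent argument need not apply because the maximal admissible $K_{k-1}$ may exceed $2\delta_{g_p,j}/\overline Q_{p,j}$ and $g_{p,j}$ may genuinely increase. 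Second, when $-g_{p,j}({\bf u}_{k-1}) \leq 2\delta_{g_p,j}^2/\overline Q_{p,j}$ and the constraint is $\epsilon$-active, one \emph{derives} rather than assumes that the actual gain is small: (\ref{eq:SCFO1i}) caps $K_{k-1}$ at $-g_{p,j}({\bf u}_{k-1})/\sum_i\kappa_{p,ji}|\bar u^*_{k,i}-u_{k-1,i}|$, and the descent condition forces $\sum_i\kappa_{p,ji}|\bar u^*_{k,i}-u_{k-1,i}| > \delta_{g_p,j}$, whence $K_{k-1} < 2\delta_{g_p,j}/\overline Q_{p,j}$ and the quadratic bound shows $-g_{p,j}$ cannot decrease. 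An exhaustive case analysis over the orderings of $-g_{p,j}({\bf u}_{k-1})$, $\epsilon_{p,j}$, and $2\delta_{g_p,j}^2/\overline Q_{p,j}$ --- with the $-g_{p,j}({\bf u}_0)$ term covering the history in which the no-decrease case holds at every step --- then yields (\ref{eq:gpmin}). Without this derivation of the automatic gain cap, your close-to-active step fails, and with it the whole induction.
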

\begin{proof} Rearranging (\ref{eq:SCFO1i}) leads to

\begin{equation}\label{eq:SCFO1iRE}
K_k \leq \frac{-g_{p,j}({\bf u}_k)}{\mathop {\sum} \limits_{i=1}^{n_u} \kappa_{p,ji}|\bar u_{k+1,i}^* - u_{k,i}|}.
\end{equation}

As in Theorem \ref{thm:Kbound2}, the right-hand side may be lower bounded by maximizing the denominator and minimizing the numerator, with the upper bound on the former given by (\ref{eq:Kupper}). However, the trivial lower bound on the numerator -- i.e., 0, which follows from the guarantee of feasibility -- is insufficient for our purposes and so a better way to bound the value of $-g_{p,j}({\bf u}_k)$ is needed. Two ways of sequentially bounding $-g_{p,j}({\bf u}_k)$ with respect to $-g_{p,j}({\bf u}_{k-1})$ are now considered.

Consider first the nonstrict version of the bound in (\ref{eq:bound1U}):

\begin{equation}\label{eq:bound1Uns}
g_{p,j} ({\bf u}_{k+1}) \leq g_{p,j} ({\bf u}_k) + \displaystyle \sum_{i=1}^{n_u} \tilde \kappa_{p,ji} | u_{k+1,i} - u_{k,i} |,
\end{equation}

\noindent which may hold with equality even when ${\bf u}_{k+1} \neq {\bf u}_{k}$ \citep{Bunin:Lip}. From Definition \ref{def:strictness}, one has that $\tilde \kappa_{p,ji} \leq \gamma_j \kappa_{p,ji}$, which then allows:

\begin{equation}\label{eq:bound1Uns2}
\begin{array}{l}
\displaystyle g_{p,j} ({\bf u}_k) + \displaystyle \sum_{i=1}^{n_u} \tilde \kappa_{p,ji} | u_{k+1,i} - u_{k,i} | \leq g_{p,j} ({\bf u}_k) + \gamma_j \displaystyle \sum_{i=1}^{n_u} \kappa_{p,ji} | u_{k+1,i} - u_{k,i} |  \\
\displaystyle \Rightarrow g_{p,j} ({\bf u}_{k+1}) \leq g_{p,j} ({\bf u}_k) + \gamma_j \displaystyle \sum_{i=1}^{n_u} \kappa_{p,ji} | u_{k+1,i} - u_{k,i} |.
\end{array}
\end{equation}

At the same time, since $K_k$ is always chosen so that Condition (\ref{eq:SCFO1}) is satisfied, this condition may be exploited and rearranged to yield

\begin{equation}\label{eq:SCFO1RE}
\displaystyle \mathop {\sum} \limits_{i=1}^{n_u} \kappa_{p,ji} | u_{k+1,i} - u_{k,i} | \leq -g_{p,j}({\bf u}_k),
\end{equation}

\noindent which, following the multiplication of both sides of (\ref{eq:SCFO1RE}) by $\gamma_j$, allows for the bound in (\ref{eq:bound1Uns2}) to be developed further:

\begin{equation}\label{eq:bound1Uns3}
\begin{array}{l}
\gamma_j \displaystyle \sum_{i=1}^{n_u} \kappa_{p,ji} | u_{k+1,i} - u_{k,i} | \leq -\gamma_j g_{p,j} ({\bf u}_k) \\
\Rightarrow g_{p,j} ({\bf u}_{k+1}) \leq g_{p,j} ({\bf u}_k) - \gamma_j g_{p,j}({\bf u}_k) = (1-\gamma_j)g_{p,j}({\bf u}_k) \\
\Rightarrow g_{p,j} ({\bf u}_{k+1}) \leq (1-\gamma_j)g_{p,j}({\bf u}_k) \\
\Rightarrow g_{p,j} ({\bf u}_{k}) \leq (1-\gamma_j)g_{p,j}({\bf u}_{k-1}) \\
\Leftrightarrow -g_{p,j} ({\bf u}_{k}) \geq (1-\gamma_j)(-g_{p,j}({\bf u}_{k-1})).
\end{array}
\end{equation}

\noindent Here, we have shifted the indices back to bound $g_{p,j}({\bf u}_k)$ with respect to $g_{p,j}({\bf u}_{k-1})$ -- the bound above being valid for \emph{any} two consecutive iterations.

The second way of bounding $-g_{p,j} ({\bf u}_{k})$ with respect to $-g_{p,j} ({\bf u}_{k-1})$ considers the specific case where $-g_{p,j} ({\bf u}_{k-1}) \leq \epsilon_{p,j}$, for which one may exploit, from the projection, the guarantee that $\nabla g_{p,j} ({\bf u}_{k-1})^T (\bar {\bf u}_{k}^* - {\bf u}_{k-1}) \leq -\delta_{g_p,j}$. This is done by employing the quadratic upper bound for $g_{p,j}$ and taking the steps analogous to (\ref{eq:boundgknown}) and (\ref{eq:boundgknown2}), which leads to

\begin{equation}\label{eq:boundgp}
\begin{array}{l}
g_{p,j} ({\bf u}_{k}) - g_{p,j} ({\bf u}_{k-1}) \leq K_{k-1} \nabla g_{p,j} ({\bf u}_{k-1})^T (\bar {\bf u}_{k}^* - {\bf u}_{k-1}) + \\
\hspace{10mm}\displaystyle \frac{K_{k-1}^2}{2} \sum_{i_1=1}^{n_u} \sum_{i_2=1}^{n_u} M_{g_{p,j},i_1 i_2} | (\bar u_{k,i_1}^* - u_{k-1,i_1})(\bar u_{k,i_2}^* - u_{k-1,i_2}) |.
\end{array}
\end{equation}

By forcing the right-hand side of (\ref{eq:boundgp}) to be non-positive, one sees, by the same analysis as in Theorem \ref{thm:Kbound2}, that

\begin{equation}\label{eq:Kimply1}
\displaystyle K_{k-1} \leq 2 \frac{\delta_{g_p,j}}{\overline Q_{p,j}} \Rightarrow - g_{p,j} ({\bf u}_{k-1}) \leq -g_{p,j} ({\bf u}_{k}).
\end{equation}

Because Condition (\ref{eq:SCFO1i}) must be fulfilled by the experiment at $k-1$, it follows that

\begin{equation}\label{eq:SCFO1iREshift}
K_{k-1} \leq \frac{-g_{p,j}({\bf u}_{k-1})}{\mathop {\sum} \limits_{i=1}^{n_u} \kappa_{p,ji}|\bar u_{k,i}^* - u_{k-1,i}|},
\end{equation}

\noindent which may be used to extend the implication of (\ref{eq:Kimply1}):

\begin{equation}\label{eq:implyextend1}
\displaystyle \frac{-g_{p,j}({\bf u}_{k-1})}{\mathop {\sum} \limits_{i=1}^{n_u} \kappa_{p,ji}|\bar u_{k,i}^* - u_{k-1,i}|} \leq 2 \frac{\delta_{g_p,j}}{\overline Q_{p,j}} \Rightarrow K_{k-1} \leq 2 \frac{\delta_{g_p,j}}{\overline Q_{p,j}}.
\end{equation}

This now allows us the statement:

\begin{equation}\label{eq:Kimply2}
\displaystyle \frac{-g_{p,j}({\bf u}_{k-1})}{\mathop {\sum} \limits_{i=1}^{n_u} \kappa_{p,ji}|\bar u_{k,i}^* - u_{k-1,i}|} \leq 2 \frac{\delta_{g_p,j}}{\overline Q_{p,j}} \Rightarrow - g_{p,j} ({\bf u}_{k-1}) \leq -g_{p,j} ({\bf u}_{k}).
\end{equation}

This may be advanced further by upper bounding the left-hand side by deriving a lower bound on the denominator. Note that, from the projection:

\begin{equation}\label{eq:suff1re}
\begin{array}{r}
\nabla g_{p,j}({\bf{u}}_{k-1})^T (\bar {\bf{u}}_{k}^* - {\bf{u}}_{k-1}) = \displaystyle \sum_{i=1}^{n_u} \frac{\partial g_{p,j}}{\partial u_i} \Big |_{{\bf u}_k} \left( \bar u_{k,i}^* - u_{k-1,i} \right) \leq -\delta_{g_p,j} \\
\Leftrightarrow \displaystyle \sum_{i=1}^{n_u} \frac{\partial g_{p,j}}{\partial u_i} \Big |_{{\bf u}_k} \left( u_{k-1,i} - \bar u_{k,i}^* \right) \geq \delta_{g_p,j}.
\end{array}
\end{equation}

Since $xy \leq |x||y|$ for any $x,y \in \mathbb{R}$ and $|x||y| < \overline x |y|$ for $|x| < \overline x$ and $y \neq 0$, it is readily seen that

\begin{equation}\label{eq:suff1re2}
\begin{array}{r}
\displaystyle \sum_{i=1}^{n_u} \kappa_{p,ji} | \bar u_{k,i}^* - u_{k-1,i} | > \displaystyle \sum_{i=1}^{n_u} \frac{\partial g_{p,j}}{\partial u_i} \Big |_{{\bf u}_k} \left( u_{k-1,i} - \bar u_{k,i}^* \right) \geq \delta_{g_p,j} \\
\Rightarrow \displaystyle \frac{-g_{p,j}({\bf u}_{k-1})}{\mathop {\sum} \limits_{i=1}^{n_u} \kappa_{p,ji}|\bar u_{k,i}^* - u_{k-1,i}|} < \displaystyle \frac{-g_{p,j}({\bf u}_{k-1})}{\delta_{g_p,j}},
\end{array}
\end{equation}

\noindent which, by

\begin{equation}\label{eq:implyextend2}
\displaystyle \displaystyle \frac{-g_{p,j}({\bf u}_{k-1})}{\delta_{g_p,j}} \leq 2 \frac{\delta_{g_p,j}}{\overline Q_{p,j}} \\
\Rightarrow \displaystyle \displaystyle \frac{-g_{p,j}({\bf u}_{k-1})}{\mathop {\sum} \limits_{i=1}^{n_u} \kappa_{p,ji}|\bar u_{k,i}^* - u_{k-1,i}|} \leq 2 \frac{\delta_{g_p,j}}{\overline Q_{p,j}},
\end{equation}

\noindent finally allows

\begin{equation}\label{eq:Kimply3}
\displaystyle \frac{-g_{p,j}({\bf u}_{k-1})}{\delta_{g_p,j}} \leq 2 \frac{\delta_{g_p,j}}{\overline Q_{p,j}} \Rightarrow - g_{p,j} ({\bf u}_{k-1}) \leq -g_{p,j} ({\bf u}_{k}),
\end{equation}

\noindent or

\begin{equation}\label{eq:Kimply4}
\displaystyle -g_{p,j}({\bf u}_{k-1}) \leq 2 \frac{\delta_{g_p,j}^2}{\overline Q_{p,j}} \Rightarrow - g_{p,j} ({\bf u}_{k-1}) \leq -g_{p,j} ({\bf u}_{k}),
\end{equation}

\noindent provided, again, that $-g_{p,j} ({\bf u}_{k-1}) \leq \epsilon_{p,j}$.

Using (\ref{eq:bound1Uns3}) and (\ref{eq:Kimply4}), one may now proceed to derive useful bounds on $-g_{p,j} ({\bf u}_k)$, which will be conditional in nature since the relationships between $-g_{p,j} ({\bf u}_{k-1})$, $\epsilon_{p,j}$, and $2 \delta_{g_p,j}^2/\overline Q_{p,j}$ will influence whether (\ref{eq:bound1Uns3}) and/or (\ref{eq:Kimply4}) may be used and whether or not they are useful. The procedure taken here is to split up all of the possibilities by considering the following questions:  
\newpage
\begin{enumerate}[A.]
\setlength\itemindent{.65cm} \item $-g_{p,j} ({\bf u}_{k-1}) > \epsilon_{p,j}$? (True/False) \vspace{1mm}
\setlength\itemindent{.65cm} \item $\displaystyle -g_{p,j} ({\bf u}_{k-1}) > 2 \frac{\delta_{g_p,j}^2}{\overline Q_{p,j}}$? (T/F)
\setlength\itemindent{.65cm} \item $\displaystyle \epsilon_{p,j} > 2 \frac{\delta_{g_p,j}^2}{\overline Q_{p,j}}$? (T/F)
\end{enumerate}

\noindent and generating all of the possible scenarios based on the eight permutations of the answers:

$$
\begin{array}{lllll}
\begin{array}{c}{\rm Scenario\;1} \\ {\rm (TTT)} \end{array} & : & \begin{array}{l} ({\rm A}): -g_{p,j} ({\bf u}_{k-1}) > \epsilon_{p,j} \vspace{1mm} \\ ({\rm B}): \displaystyle -g_{p,j} ({\bf u}_{k-1}) > 2 \frac{\delta_{g_p,j}^2}{\overline Q_{p,j}} \\  ({\rm C}): \displaystyle \epsilon_{p,j} > 2 \frac{\delta_{g_p,j}^2}{\overline Q_{p,j}} \end{array} & \Leftrightarrow & -g_{p,j} ({\bf u}_{k-1}) > \displaystyle \epsilon_{p,j} > 2 \frac{\delta_{g_p,j}^2}{\overline Q_{p,j}} \vspace{3mm} \\

\begin{array}{c}{\rm Scenario\;2} \\ {\rm (TTF)} \end{array} & : & \begin{array}{l} ({\rm A}): -g_{p,j} ({\bf u}_{k-1}) > \epsilon_{p,j} \vspace{1mm} \\ ({\rm B}): \displaystyle -g_{p,j} ({\bf u}_{k-1}) > 2 \frac{\delta_{g_p,j}^2}{\overline Q_{p,j}} \\  ({\rm C}): \displaystyle \epsilon_{p,j} \leq 2 \frac{\delta_{g_p,j}^2}{\overline Q_{p,j}} \end{array} & \Leftrightarrow & \displaystyle -g_{p,j} ({\bf u}_{k-1}) > 2 \frac{\delta_{g_p,j}^2}{\overline Q_{p,j}} \geq  \epsilon_{p,j} \vspace{3mm} \\

\begin{array}{c}{\rm Scenario\;3} \\ {\rm (TFT)} \end{array} & : & \begin{array}{l} ({\rm A}): -g_{p,j} ({\bf u}_{k-1}) > \epsilon_{p,j} \vspace{1mm} \\ ({\rm B}): \displaystyle -g_{p,j} ({\bf u}_{k-1}) \leq 2 \frac{\delta_{g_p,j}^2}{\overline Q_{p,j}} \\  ({\rm C}): \displaystyle \epsilon_{p,j} > 2 \frac{\delta_{g_p,j}^2}{\overline Q_{p,j}} \end{array} & \Rightarrow & {\rm impossible}\; ({\rm A} \wedge {\rm C} \rightarrow \lnot {\rm B}) \vspace{3mm} \\

\begin{array}{c}{\rm Scenario\;4} \\ {\rm (TFF)} \end{array} & : & \begin{array}{l} ({\rm A}): -g_{p,j} ({\bf u}_{k-1}) > \epsilon_{p,j} \vspace{1mm} \\ ({\rm B}): \displaystyle -g_{p,j} ({\bf u}_{k-1}) \leq 2 \frac{\delta_{g_p,j}^2}{\overline Q_{p,j}} \\  ({\rm C}): \displaystyle \epsilon_{p,j} \leq 2 \frac{\delta_{g_p,j}^2}{\overline Q_{p,j}} \end{array} & \Rightarrow & \displaystyle 2 \frac{\delta_{g_p,j}^2}{\overline Q_{p,j}} \geq -g_{p,j} ({\bf u}_{k-1}) > \epsilon_{p,j} \vspace{3mm} \\

\begin{array}{c}{\rm Scenario\;5} \\ {\rm (FTT)} \end{array} & : & \begin{array}{l} ({\rm A}): -g_{p,j} ({\bf u}_{k-1}) \leq \epsilon_{p,j} \vspace{1mm} \\ ({\rm B}): \displaystyle -g_{p,j} ({\bf u}_{k-1}) > 2 \frac{\delta_{g_p,j}^2}{\overline Q_{p,j}} \\  ({\rm C}): \displaystyle \epsilon_{p,j} > 2 \frac{\delta_{g_p,j}^2}{\overline Q_{p,j}} \end{array} & \Leftrightarrow & \displaystyle \epsilon_{p,j} \geq -g_{p,j} ({\bf u}_{k-1}) > 2 \frac{\delta_{g_p,j}^2}{\overline Q_{p,j}}  \vspace{3mm} \\

\begin{array}{c}{\rm Scenario\;6} \\ {\rm (FTF)} \end{array} & : & \begin{array}{l} ({\rm A}): -g_{p,j} ({\bf u}_{k-1}) \leq \epsilon_{p,j} \vspace{1mm} \\ ({\rm B}): \displaystyle -g_{p,j} ({\bf u}_{k-1}) > 2 \frac{\delta_{g_p,j}^2}{\overline Q_{p,j}} \\  ({\rm C}): \displaystyle \epsilon_{p,j} \leq 2 \frac{\delta_{g_p,j}^2}{\overline Q_{p,j}} \end{array} & \Rightarrow & {\rm impossible}\; ({\rm A} \wedge {\rm B} \rightarrow \lnot {\rm C})  \vspace{3mm} \\

\end{array}
$$

$$
\begin{array}{lllll}

\begin{array}{c}{\rm Scenario\;7} \\ {\rm (FFT)} \end{array} & : & \begin{array}{l} ({\rm A}): -g_{p,j} ({\bf u}_{k-1}) \leq \epsilon_{p,j} \vspace{1mm} \\ ({\rm B}): \displaystyle -g_{p,j} ({\bf u}_{k-1}) \leq 2 \frac{\delta_{g_p,j}^2}{\overline Q_{p,j}} \\  ({\rm C}): \displaystyle \epsilon_{p,j} > 2 \frac{\delta_{g_p,j}^2}{\overline Q_{p,j}} \end{array} & \Rightarrow & \displaystyle \epsilon_{p,j} > 2 \frac{\delta_{g_p,j}^2}{\overline Q_{p,j}} \geq   -g_{p,j} ({\bf u}_{k-1}) \vspace{3mm} \\

\begin{array}{c}{\rm Scenario\;8} \\ {\rm (FFF)} \end{array} & : & \begin{array}{l} ({\rm A}): -g_{p,j} ({\bf u}_{k-1}) \leq \epsilon_{p,j} \vspace{1mm} \\ ({\rm B}): \displaystyle -g_{p,j} ({\bf u}_{k-1}) \leq 2 \frac{\delta_{g_p,j}^2}{\overline Q_{p,j}} \\  ({\rm C}): \displaystyle \epsilon_{p,j} \leq 2 \frac{\delta_{g_p,j}^2}{\overline Q_{p,j}} \end{array} & \Leftrightarrow & \displaystyle 2 \frac{\delta_{g_p,j}^2}{\overline Q_{p,j}} \geq \epsilon_{p,j} \geq    -g_{p,j} ({\bf u}_{k-1})

\end{array}
$$

We now go through all of the scenarios (excluding 3 and 6, which cannot occur), derive lower bounds on $-g_{p,j} ({\bf u}_k)$ for each, and take their minimum to obtain an overall bound that holds for all of the different possibilities. Treating Scenarios 1, 2, and 4 first, we note that we can simply apply (\ref{eq:bound1Uns3}) to obtain:

\begin{equation}\label{eq:scen1}
{\rm Scenario}\; 1/2/4 \Rightarrow -g_{p,j} ({\bf u}_k) > (1-\gamma_j) \epsilon_{p,j},
\end{equation}

\noindent which follows from the fact that $-g_{p,j} ({\bf u}_{k-1}) > \epsilon_{p,j}$ in these scenarios.

In a similar manner, exploiting $-g_{p,j} ({\bf u}_{k-1}) > 2 \delta_{g_p,j}^2/\overline Q_{p,j}$ in Scenario 5 together with (\ref{eq:bound1Uns3}) allows:

\begin{equation}\label{eq:scen2}
{\rm Scenario}\; 5 \Rightarrow -g_{p,j} ({\bf u}_k) > 2 (1-\gamma_j) \frac{\delta_{g_p,j}^2}{\overline Q_{p,j}}.
\end{equation}

Consider now Scenarios 7 and 8, which are more involved since one cannot simply employ (\ref{eq:bound1Uns3}) to any useful end, the value of $-g_{p,j} ({\bf u}_{k-1})$ not being explicitly lower bounded. However, exploiting the fact that $-g_{p,j} ({\bf u}_{k-1}) \leq 2 \delta_{g_p,j}^2/\overline Q_{p,j}$ together with (\ref{eq:Kimply4}) allows:

\begin{equation}\label{eq:scen3}
{\rm Scenario}\; 7/8 \Rightarrow -g_{p,j} ({\bf u}_k) \geq -g_{p,j} ({\bf u}_{k-1}).
\end{equation}

\noindent To advance this further, one now needs to bound $-g_{p,j} ({\bf u}_{k-1})$. Again, consider the six possible scenarios but this time for $-g_{p,j} ({\bf u}_{k-2})$. Clearly, Scenarios 1/2/4/5 for $-g_{p,j} ({\bf u}_{k-2})$ coupled with Scenarios 7/8 for $-g_{p,j} ({\bf u}_{k-1})$ will yield the same bounds as (\ref{eq:scen1}) and (\ref{eq:scen2}). In fact, one can easily see that having at least one occurrence of Scenarios 1/2/4/5 for some experimental iteration between 0 and $k-1$ ensures the validity of  (\ref{eq:scen1}) and (\ref{eq:scen2}) for $k$, as shifting to Scenarios 7/8 cannot push these lower bounds any further. The only remaining case of interest is when Scenarios 7/8 occur for all $0,...,k-1$. If this is so, then the value of $-g_{p,j}$ can never go below its initial value, i.e.:

\begin{equation}\label{eq:scen4}
{\rm Scenario}\; 7/8 \; {\rm for\;all} \; 0,...,k-1 \Rightarrow -g_{p,j} ({\bf u}_k) \geq -g_{p,j} ({\bf u}_{0}).
\end{equation}

As such, the minimum value that $-g_{p,j} ({\bf u}_k)$ can ever achieve for any scenario is bounded from below by

\begin{equation}\label{eq:gpmin}
-g_{p,j} ({\bf u}_k) \geq \mathop {\min} \left[ (1-\gamma_j) \epsilon_{p,j}, 2 (1-\gamma_j) \frac{\delta_{g_p,j}^2}{\overline Q_{p,j}}, -g_{p,j} ({\bf u}_{0})  \right].
\end{equation}

\noindent Using this as the global lower bound on the numerator of (\ref{eq:SCFO1iRE}), upper bounding the denominator by $\overline L_{p,j}$, and taking the minimum over the constraints $j = 1,...,n_{g_p}$ then yields the desired result. \end{proof}

\begin{corollary}[Lower bound on filter gain value]
\label{cor:Kbound}
Let the project-and-filter approach of (\ref{eq:proj}) and (\ref{eq:inputfilter}) be applied with the assumption that the projection has been feasible for all experimental iterations $0,...,k$ and that $K_k$ is chosen as the maximum value on the interval $[0,1]$ subject to the limitations of (\ref{eq:SCFO1i}), (\ref{eq:SCFO2i}), and (\ref{eq:SCFO7i}). The value

\begin{equation}\label{eq:lowerKbound}
 \underline K = \mathop {\min} \left[ \begin{array}{c} \displaystyle 2 \frac{\delta_\phi}{\overline Q_\phi}, \vspace{1mm} \\ \displaystyle \mathop {\min} \limits_{j=1,...,n_g} \mathop {\min} \left[ \frac{\epsilon_j}{\overline L_j}, 2\frac{\delta_{g,j}}{\overline Q_j} \right], \vspace{1mm} \\
\displaystyle  \mathop {\min} \limits_{j=1,...,n_{g_p}} \frac{\mathop {\min} \left[ (1-\gamma_j)\epsilon_{p,j},  \displaystyle  2 (1-\gamma_j) \frac{ \delta_{g_p,j}^2}{\overline Q_{p,j}}, -g_{p,j} ({\bf u}_0) \right]}{\overline L_{p,j}} 
\end{array}\right]
\end{equation}

\noindent is a valid lower bound on the filter gain, with $0 < \underline K \leq K_k$.

\end{corollary}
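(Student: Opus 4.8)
The plan is to treat this corollary as an assembly of the three preceding theorems together with a maximality argument. Theorems \ref{thm:Kbound1}, \ref{thm:Kbound2}, and \ref{thm:Kbound3} each establish an implication of the form ``$K_k$ lying in some interval $[0, B_i] \Rightarrow$ the corresponding condition holds'', where the upper endpoints $B_1$, $B_2$, $B_3$ are the bounds appearing in (\ref{eq:lowerKboundA}), (\ref{eq:lowerKboundB}), and (\ref{eq:lowerKboundC}) respectively, and are precisely the three arguments of the outer minimum defining $\underline K$ in (\ref{eq:lowerKbound}). Since $\underline K = \min[B_1, B_2, B_3]$, the interval $[0, \underline K]$ is contained in each $[0, B_i]$, so every $K \in [0, \underline K]$ simultaneously satisfies Conditions (\ref{eq:SCFO1i}), (\ref{eq:SCFO2i}), and (\ref{eq:SCFO7i}).

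Next I would invoke the way $K_k$ is selected. By hypothesis, $K_k$ is the largest value in $[0,1]$ satisfying all three conditions. The previous step shows that every point of $[0, \min(\underline K, 1)]$ is feasible for this selection, so the maximum must satisfy $K_k \geq \min(\underline K, 1)$. When $\underline K \leq 1$ this is exactly the desired $\underline K \leq K_k$; in the degenerate case $\underline K > 1$ the feasible set contains all of $[0,1]$, so $K_k = 1$ and one may simply replace $\underline K$ by $1$ without loss. Either way, $K_k$ is bounded below by the stated quantity.

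It then remains to verify strict positivity, $\underline K > 0$, which is what makes the bound useful. Since $\underline K$ is a minimum of finitely many terms, it suffices that each be strictly positive; each is a ratio whose numerator is strictly positive and whose denominator is finite and nonnegative (a vanishing denominator, e.g. for an affine cost where $\overline Q_\phi = 0$, simply renders that term non-binding and hence harmless). The relevant numerators are all strictly positive: $\delta_\phi$, $\epsilon_j$, and $\delta_{g,j}$ are positive by the standing hypothesis ${\boldsymbol \epsilon}_p, {\boldsymbol \epsilon}, {\boldsymbol \delta}_{g_p}, {\boldsymbol \delta}_g, \delta_\phi \succ {\bf 0}$; the factor $1-\gamma_j$ is strictly positive because $\gamma_j < 1$ by Definition \ref{def:strictness}; and---crucially for the experimental-constraint term---$-g_{p,j}({\bf u}_0) > 0$ by the strict feasibility of the initial iterate guaranteed by Assumption A2. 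Hence each of the three arguments of the outer minimum is strictly positive, giving $\underline K > 0$.

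Because almost all of the analytic work is done inside the three theorems, the corollary itself is essentially bookkeeping, and I do not anticipate a genuine obstacle. The only points needing care are (i) confirming that the intersection of the three feasibility intervals is exactly $[0, \underline K]$ rather than something smaller, which is immediate from $\underline K$ being their common minimum, and (ii) the edge case $\underline K > 1$, where the literal inequality $\underline K \leq K_k$ must be read as $\min(\underline K, 1) \leq K_k$; this is harmless since the purpose of the bound is only to certify a strictly positive floor on the gain. I would also note in passing that the positivity of the numerators, rather than of the denominators $\overline Q_\phi$, $\overline Q_j$, $\overline L_j$, $\overline L_{p,j}$, is what drives the argument, so degenerate curvature or Lipschitz constants do not threaten $\underline K > 0$.
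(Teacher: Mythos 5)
Your proposal is correct and follows essentially the same route as the paper, whose own proof is a one-line appeal to Theorems \ref{thm:Kbound1}--\ref{thm:Kbound3} plus the observation that the line search will find any value guaranteed feasible; you simply spell out the intersection-of-intervals and maximality steps, the $\underline K > 1$ edge case, and the strict positivity of $\underline K$ (via ${\boldsymbol \epsilon}_p, {\boldsymbol \epsilon}, {\boldsymbol \delta}_{g_p}, {\boldsymbol \delta}_g, \delta_\phi \succ {\bf 0}$, $\gamma_j < 1$, and Assumption A2), which the paper leaves implicit. No gap.
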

\begin{proof} The result follows from Theorems \ref{thm:Kbound1}-\ref{thm:Kbound3}, as $\underline K$ is guaranteed to satisfy (\ref{eq:SCFO1i}), (\ref{eq:SCFO2i}), and (\ref{eq:SCFO7i}) and will thus be found by the line search. \end{proof}

\subsection{Convergence to a Fixed Point}
We now address what happens when the projection is \emph{not} feasible at all experimental iterations. As already mentioned, we fix ${\bf u}_{k+1} := {\bf u}_k$ whenever this infeasibility is encountered, which effectively ensures that the scheme converge once Problem (\ref{eq:proj}) does not admit a solution. It will now be proven that this must occur after a finite number of experiments that can be upper bounded by a function of the problem characteristics and the projection parameters. Another strictness definition is required first, however.

\begin{definition}[The strictness coefficient for the Lipschitz constants of the derivatives of the cost function]
\label{def:strictness2}
Denote by $\tilde M_{\phi,i_1 i_2}$ the \emph{nonstrict} Lipschitz constants for the derivatives of the cost function:

\begin{equation}\label{eq:lipcon2ns}
-\tilde M_{\phi,i_1 i_2} \leq \frac{\partial^2 \phi_p}{\partial u_{i_2} \partial u_{i_1} } \Big |_{\bf u} \leq \tilde M_{\phi,i_1 i_2}, \;\; \forall {\bf u} \in \mathcal{I}.
\end{equation}

The strictness coefficient for the Lipschitz constants of the cost derivatives is then defined as

\begin{equation}\label{eq:gammaM}
\gamma_\phi = \mathop {\max} \limits_{\footnotesize{\begin{array}{c}i_1 = 1,...,n_u \\ i_2 = 1,...,n_u \end{array}}} \frac{\tilde M_{\phi,i_1 i_2}}{M_{\phi,i_1 i_2}} < 1.
\end{equation}

\end{definition}

\begin{theorem}[An upper bound on the number of experiments prior to convergence to a fixed point]
\label{thm:iterations}
Let the project-and-filter approach of (\ref{eq:proj}) and (\ref{eq:inputfilter}) be applied at every experimental iteration where the projection is feasible, with $K_k$ chosen as the maximum value on the interval $[0,1]$ subject to the limitations of (\ref{eq:SCFO1i}), (\ref{eq:SCFO2i}), and (\ref{eq:SCFO7i}), and let ${\bf u}_{k+1} := {\bf u}_k$ if the projection is not feasible. It follows that the number of experiments for which the projection can be feasible cannot exceed

\begin{equation}\label{eq:maxiter}
\frac{\underline \phi_p - \phi_p({\bf u}_0)}{\mathop {\max} \left[ \displaystyle \underline K \left( \underline K  \frac{\gamma_\phi \overline Q_\phi}{2}  - \delta_\phi \right), 2(\gamma_\phi - 1) \frac{ \delta_\phi^2}{\overline Q_\phi} \right]},
\end{equation}

\noindent where $\underline \phi_p$ is the global minimum cost function value for Problem (\ref{eq:mainprob}).

\end{theorem}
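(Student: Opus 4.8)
The plan is to convert the guaranteed monotone cost decrease into an iteration count: I will show that each experiment at which the projection (\ref{eq:proj}) is feasible lowers $\phi_p$ by at least a fixed positive amount $\underline D$ expressible through the problem data, and then invoke the fact that $\phi_p$ is bounded below by $\underline\phi_p$ to conclude that at most $(\phi_p({\bf u}_0) - \underline\phi_p)/\underline D$ such experiments can occur. Matching $\underline D$ to the denominator of (\ref{eq:maxiter}) is the whole content of the argument.

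To obtain the per-step decrease I would start from the tight (nonstrict) analogue of the cost bound (\ref{eq:bound2U}), with $\tilde M_{\phi,i_1 i_2}$ in place of $M_{\phi,i_1 i_2}$, substitute the filter law (\ref{eq:inputfilter}), and use $\tilde M_{\phi,i_1 i_2} \le \gamma_\phi M_{\phi,i_1 i_2}$ from Definition \ref{def:strictness2} to write
\[
\phi_p({\bf u}_{k+1}) - \phi_p({\bf u}_k) \le K_k\, \nabla \phi_p({\bf u}_k)^T(\bar {\bf u}_{k+1}^* - {\bf u}_k) + \frac{K_k^2}{2}\gamma_\phi \sum_{i_1=1}^{n_u}\sum_{i_2=1}^{n_u} M_{\phi,i_1 i_2}\,|(\bar u_{k+1,i_1}^* - u_{k,i_1})(\bar u_{k+1,i_2}^* - u_{k,i_2})|.
\]
Writing $d := -\nabla \phi_p({\bf u}_k)^T(\bar {\bf u}_{k+1}^* - {\bf u}_k)$ and $q$ for the double sum, the projection guarantees $d \ge \delta_\phi$ while (\ref{eq:Mupper1}) gives $q \le \overline Q_\phi$, so the decrease is at least $K_k d - \tfrac12 K_k^2\gamma_\phi q$. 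The coefficient $\gamma_\phi<1$ is precisely what prevents the linear and quadratic parts from cancelling when (\ref{eq:SCFO7i}) is tight, and is therefore indispensable.

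It remains to bound $K_k d - \tfrac12 K_k^2\gamma_\phi q$ below uniformly, and here I would split on the filter gain. Using $d\ge\delta_\phi$ and $q\le\overline Q_\phi$ directly gives the decrease $\ge D(K_k)$ with $D(K):=K\delta_\phi - \tfrac12 K^2\gamma_\phi\overline Q_\phi$, a concave function positive on $(0, 2\delta_\phi/(\gamma_\phi\overline Q_\phi))$. When $K_k \le 2\delta_\phi/\overline Q_\phi$, concavity of $D$ on the interval $[\underline K, 2\delta_\phi/\overline Q_\phi]$ (whose left endpoint is valid because $\underline K \le 2\delta_\phi/\overline Q_\phi$ by Theorem \ref{thm:Kbound1} and Corollary \ref{cor:Kbound}) forces $D(K_k)\ge \min[D(\underline K), D(2\delta_\phi/\overline Q_\phi)]$, and a direct computation gives $D(2\delta_\phi/\overline Q_\phi) = 2(1-\gamma_\phi)\delta_\phi^2/\overline Q_\phi$. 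When instead $K_k > 2\delta_\phi/\overline Q_\phi$, I would \emph{not} discard $q$ but keep condition (\ref{eq:SCFO7i}) in the form $q \le 2d/K_k$; substituting it yields the sharper decrease $\ge K_k d(1-\gamma_\phi) > 2(1-\gamma_\phi)\delta_\phi^2/\overline Q_\phi$, i.e.\ again at least the same quantity. Hence in all cases the per-step decrease is at least
\[
\underline D := \min\!\left[\, \underline K\Big(\delta_\phi - \tfrac12 \underline K\gamma_\phi\overline Q_\phi\Big),\ 2(1-\gamma_\phi)\frac{\delta_\phi^2}{\overline Q_\phi}\,\right],
\]
which is strictly positive since $\underline K\le 2\delta_\phi/\overline Q_\phi$ makes the first entry positive and $\gamma_\phi<1$ makes the second positive.

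Finally, summing the decrease over the $N$ feasible-projection experiments gives $N\,\underline D \le \phi_p({\bf u}_0) - \phi_p({\bf u}_N) \le \phi_p({\bf u}_0) - \underline\phi_p$, whence $N \le (\phi_p({\bf u}_0) - \underline\phi_p)/\underline D$; rewriting $-\underline D = \max[\,\underline K(\tfrac12\underline K\gamma_\phi\overline Q_\phi - \delta_\phi),\ 2(\gamma_\phi-1)\delta_\phi^2/\overline Q_\phi\,]$ turns this into exactly (\ref{eq:maxiter}). I expect the main obstacle to be the case $K_k > 2\delta_\phi/\overline Q_\phi$: there the naive bound $D(K_k)$ can be small or even negative, and the decrease must instead be recovered from (\ref{eq:SCFO7i}) itself, which constrains the realized quadratic growth $q$ once the step is large. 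Recognizing that these two complementary bounds collapse into the single minimum $\underline D$ --- and hence into the maximum appearing in the denominator of (\ref{eq:maxiter}) --- is the delicate part of the bookkeeping.
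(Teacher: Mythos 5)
Your proposal is correct and follows essentially the same route as the paper: both convert the guaranteed per-step decrease into an iteration count by substituting the filter law into the nonstrict ($\gamma_\phi$-scaled) quadratic cost bound and then exploiting the convexity/concavity of the resulting quadratic in $K_k$ over an interval with left endpoint $\underline K$, with the two endpoint evaluations producing exactly the two entries of the max in (\ref{eq:maxiter}). The only cosmetic difference is that you split on the fixed threshold $2\delta_\phi/\overline Q_\phi$ and invoke (\ref{eq:SCFO7i}) as $q \le 2d/K_k$ in the large-$K_k$ case, whereas the paper works with the data-dependent interval $[\underline K,\,2d/q]$ and globally bounds $d$ and $q$ at the end; the two bookkeepings yield the identical bound.
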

\begin{proof}
We start by deriving the minimal cost decrease that must occur whenever the projection is feasible. Taking the bound in (\ref{eq:bound2U}), consider its nonstrict version:

\begin{equation}\label{eq:costbound1}
\begin{array}{l}
\phi_p({\bf u}_{k+1}) - \phi_p({\bf u}_{k}) \leq \nabla \phi_p({\bf u}_{k})^T ({\bf u}_{k+1} - {\bf u}_{k}) +\\
\hspace{20mm} \displaystyle \frac{1}{2} \mathop {\sum} \limits_{i_1=1}^{n_u} \mathop {\sum} \limits_{i_2=1}^{n_u} \tilde M_{\phi,i_1 i_2} | ( u_{k+1,i_1}- u_{k,i_1})(u_{k+1,i_2} - u_{k,i_2}) |,
\end{array}
\end{equation}

\noindent which may hold with equality even if ${\bf u}_{k+1} \neq {\bf u}_k$, and employ Definition \ref{def:strictness2}, which states that $\tilde M_{\phi,i_1 i_2} \leq \gamma_\phi M_{\phi,i_1 i_2}$. Since

\begin{equation}\label{eq:Mimply0}
\tilde M_{\phi,i_1 i_2} \leq \gamma_\phi M_{\phi,i_1 i_2} \Rightarrow \begin{array}{l} \tilde M_{\phi,i_1 i_2} | ( u_{k+1,i_1}- u_{k,i_1})(u_{k+1,i_2} - u_{k,i_2}) | \vspace{1mm} \\ \hspace{5mm} \leq \gamma_\phi M_{\phi,i_1 i_2} | ( u_{k+1,i_1}- u_{k,i_1})(u_{k+1,i_2} - u_{k,i_2}) |, \end{array}
\end{equation}

\noindent it follows that

\begin{equation}\label{eq:costbound2}
\begin{array}{l}
\phi_p({\bf u}_{k+1}) - \phi_p({\bf u}_{k}) \leq \nabla \phi_p({\bf u}_{k})^T ({\bf u}_{k+1} - {\bf u}_{k}) +\\
\hspace{20mm} \displaystyle \frac{\gamma_\phi}{2} \mathop {\sum} \limits_{i_1=1}^{n_u} \mathop {\sum} \limits_{i_2=1}^{n_u} M_{\phi,i_1 i_2} | ( u_{k+1,i_1}- u_{k,i_1})(u_{k+1,i_2} - u_{k,i_2}) |.
\end{array}
\end{equation}

Next, substitute the input filter law (\ref{eq:inputfilter}) into the right-hand side:

\begin{equation}\label{eq:costbound3}
\begin{array}{l}
\phi_p({\bf u}_{k+1}) - \phi_p({\bf u}_{k}) \leq K_k \nabla \phi_p({\bf u}_{k})^T (\bar {\bf u}_{k+1}^* - {\bf u}_{k}) +\\
\hspace{20mm} K_k^2 \displaystyle \frac{\gamma_\phi}{2} \mathop {\sum} \limits_{i_1=1}^{n_u} \mathop {\sum} \limits_{i_2=1}^{n_u} M_{\phi,i_1 i_2} | ( \bar u_{k+1,i_1}^*- u_{k,i_1})(\bar u_{k+1,i_2}^* - u_{k,i_2}) |
\end{array}.
\end{equation}

\noindent Considering first the case where $\gamma_\phi > 0$, note that this bound is strictly convex (quadratic) in $K_k$, and as such is strictly negative on the interval 

\begin{equation}\label{eq:interval1}
K_k \in \left( 0,-2 \frac{\nabla \phi_p({\bf u}_{k})^T (\bar {\bf u}_{k+1}^* - {\bf u}_{k})}{\gamma_\phi \mathop {\sum} \limits_{i_1=1}^{n_u} \mathop {\sum} \limits_{i_2=1}^{n_u} M_{\phi,i_1 i_2} | ( \bar u_{k+1,i_1}^*- u_{k,i_1})(\bar u_{k+1,i_2}^* - u_{k,i_2}) |} \right),
\end{equation}

\noindent i.e., between its zeros.

Using the already obtained results, it may be shown that $K_k$ will lie in a subinterval of (\ref{eq:interval1}). First, the left side of the interval in which $K_k$ will lie may be taken as $\underline K$, as it has been proven (Corollary \ref{cor:Kbound}) that $K_k$ cannot be any lower than this value. For the right side, one may use the upper bound (\ref{eq:SCFO7iRE}), since this is imposed at every experimental iteration $k$. As such, let us consider the maximum value that the bound (\ref{eq:costbound3}) can take on the interval

$$
K_k \in \left[ \underline K, -2 \frac{\nabla \phi_p ({\bf u}_k)^T (\bar {\bf u}_{k+1}^* - {\bf u}_k)}{\mathop {\sum} \limits_{i_1=1}^{n_u} \mathop {\sum} \limits_{i_2=1}^{n_u} M_{\phi,i_1 i_2} | (\bar u_{k+1,i_1}^* - u_{k,i_1})(\bar u_{k+1,i_2}^* - u_{k,i_2}) |} \right],
$$

\noindent which is easily seen to lie inside the open interval (\ref{eq:interval1}) for which a strict decrease in the cost function value is guaranteed -- this follows from $\underline K > 0$ and $\gamma_\phi < 1$. The immediate consequence is that a strict decrease in the cost function value is guaranteed for all experimental iterations where the projection is feasible.

To calculate the actual worst-case decrease, we use the strict convexity of (\ref{eq:costbound3}) in $K_k$, which implies that the bound must attain its maximum at the interval boundary. This leads to only two possibilities. First, if the maximum is attained on the left boundary, the bound (\ref{eq:costbound3}) becomes

\begin{equation}\label{eq:costbound4}
\begin{array}{l}
\phi_p({\bf u}_{k+1}) - \phi_p({\bf u}_{k}) \leq \underline K \nabla \phi_p({\bf u}_{k})^T (\bar {\bf u}_{k+1}^* - {\bf u}_{k}) +\\
\hspace{20mm} \underline K^2 \displaystyle \frac{\gamma_\phi}{2} \mathop {\sum} \limits_{i_1=1}^{n_u} \mathop {\sum} \limits_{i_2=1}^{n_u} M_{\phi,i_1 i_2} | ( \bar u_{k+1,i_1}^*- u_{k,i_1})(\bar u_{k+1,i_2}^* - u_{k,i_2}) |.
\end{array}
\end{equation}

To make this bound global and independent of $k$, one may use the bounds $\nabla \phi_{p} ({\bf u}_k)^T$ $(\bar {\bf u}_{k+1}^* - {\bf u}_k) \leq -\delta_{\phi}$, which follows from the feasibility of the projection, and (\ref{eq:Mupper1}) to obtain

\begin{equation}\label{eq:costbound5}
\phi_p({\bf u}_{k+1}) - \phi_p({\bf u}_{k}) \leq \underline K \left( \underline K \displaystyle \frac{\gamma_\phi \overline Q_\phi}{2}  - \delta_\phi \right)   < 0.
\end{equation}

For the other case where the maximum is attained on the right boundary, one may substitute the right boundary value into (\ref{eq:costbound3}), which, if evaluated, yields

\begin{equation}\label{eq:costbound6}
\begin{array}{l}
\displaystyle \phi_p({\bf u}_{k+1}) - \phi_p({\bf u}_{k}) \leq \\
\hspace{10mm} \displaystyle 2(\gamma_\phi - 1) \frac{ \left[ \nabla \phi_p ({\bf u}_k)^T (\bar {\bf u}_{k+1}^* - {\bf u}_k) \right]^2}{\mathop {\sum} \limits_{i_1=1}^{n_u} \mathop {\sum} \limits_{i_2=1}^{n_u} M_{\phi,i_1 i_2} | (\bar u_{k+1,i_1}^* - u_{k,i_1})(\bar u_{k+1,i_2}^* - u_{k,i_2}) |}.
\end{array}
\end{equation}

Since this is a negative quantity, it may be globally upper bounded by minimizing its numerator and maximizing its denominator. Using the same bounds as before immediately yields

\begin{equation}\label{eq:costbound7}
\displaystyle \phi_p({\bf u}_{k+1}) - \phi_p({\bf u}_{k}) \leq 2(\gamma_\phi - 1) \frac{ \delta_\phi^2}{\overline Q_\phi} < 0.
\end{equation}

Consider now the case where $\gamma_\phi = 0$, which could be the case if $\phi_p$ were linear. If this were so, then (\ref{eq:costbound3}) simplifies to

\begin{equation}\label{eq:costbound7a}
\phi_p({\bf u}_{k+1}) - \phi_p({\bf u}_{k}) \leq K_k \nabla \phi_p({\bf u}_{k})^T (\bar {\bf u}_{k+1}^* - {\bf u}_{k}) \leq - \underline K \delta_\phi < 0.
\end{equation}

\noindent However, this is just a special case of (\ref{eq:costbound5}).

To obtain a global bound that accounts for both (\ref{eq:costbound5}) and (\ref{eq:costbound7}), it is sufficient to take the maximum of the two:

\begin{equation}\label{eq:costbound8}
\displaystyle \phi_p({\bf u}_{k+1}) - \phi_p({\bf u}_{k}) \leq \mathop {\max} \left[ \underline K \left( \underline K \displaystyle \frac{\gamma_\phi \overline Q_\phi}{2} - \delta_\phi \right), 2(\gamma_\phi - 1) \frac{ \delta_\phi^2}{\overline Q_\phi} \right] < 0.
\end{equation}

As this decrease is ensured whenever the projection is feasible, and as the maximum suboptimality gap cannot be greater than $\phi_p({\bf u}_0) - \underline \phi_p$ for feasible-side iterates, it follows that the projection cannot be feasible for more experiments than the number given in (\ref{eq:maxiter}), as this would guarantee decreasing the cost past its global minimum value. \end{proof}  

\subsection{Fritz John Error at the Converged Point}

The FJ conditions that must be satisfied by an FJ point, ${\bf u}^*$, for Problem (\ref{eq:mainprob}) are \citep{John1948,Mangasarian1967}

\begin{equation}\label{eq:KKT}
\begin{array}{ll}
{g}_{p,j} ({\bf{u}}^*) \leq 0& j = 1,...,n_{g_p} \vspace{1mm}  \\
{g}_{j} ({\bf{u}}^*) \leq 0 & j = 1,...,n_g  \vspace{1mm} \\
{\bf u}^L \preceq {\bf u}^* \preceq {\bf u}^U & \vspace{1mm} \\
\mu_{p,j} g_{p,j} ({\bf{u}}^*) = 0 &  j = 1,...,n_{g_p} \vspace{1mm} \\
\mu_j g_{j} ({\bf{u}}^*) = 0 &  j = 1,...,n_{g} \vspace{1mm} \\
\zeta_i^L(u^L_i - u^*_i) = 0,\; \zeta_i^U(u^*_i - u^U_i) = 0 &  i = 1,...,n_u \vspace{1mm}\\
\nabla \mathcal{L}({\bf u}^*) = \mu_\phi \nabla \phi_p ({\bf{u}}^*) + \displaystyle \sum\limits_{j = 1}^{n_{g_p}} {\mu_{p,j} \nabla g_{p,j} ({\bf{u}}^*)} & \\
\hspace{25mm}+ \displaystyle \sum\limits_{j = 1}^{n_g} {\mu_{j} \nabla g_{j} ({\bf{u}}^*)} - \boldsymbol{\zeta}^L + \boldsymbol{\zeta}^U = {\bf{0}} & \vspace{1mm} \\
{\boldsymbol \mu}_{all} = \left[ \begin{array}{c}  \mu_\phi \\  {\boldsymbol \mu}_p \\ {\boldsymbol \mu} \\ {\boldsymbol \zeta}^L \\ {\boldsymbol \zeta}^U \end{array} \right] \neq {\bf 0},
\end{array}
\end{equation}

\noindent where $\mu_\phi \in \mathbb{R}_+$, ${\boldsymbol \mu_{p}} \in \mathbb{R}^{n_{g_p}}_+$, and ${\boldsymbol \mu} \in \mathbb{R}^{n_{g}}_+$ are the Lagrange multipliers for the cost, the experimental constraint, and the numerical constraint functions, respectively, while ${\boldsymbol \zeta}^L, {\boldsymbol \zeta}^U \in \mathbb{R}^{n_u}_+$ are the Lagrange multipliers for the lower and upper bound constraints. For compactness of notation, we denote by ${\boldsymbol \mu}_{all}$ the collective vector of multipliers. $\mathcal{L} : \mathbb{R}^{n_u} \rightarrow \mathbb{R}$ denotes the Lagrangian.

Noting that satisfaction of Conditions (\ref{eq:SCFO1i}), (\ref{eq:SCFO2i}), and (\ref{eq:SCFO3}) ensures that all of the experimental iterates remain feasible and thus satisfy the primal feasibility conditions of (\ref{eq:KKT}), we are only interested in the degree of suboptimality that is reflected by the lack of satisfaction of the equalities in (\ref{eq:KKT}), and define the \emph{FJ error metric} as the minimal sum of squared errors in the FJ equalities:

\begin{equation}\label{eq:KKTerr}
\mathcal{E}({\bf u}) = \hspace{-2mm} \mathop {\min} \limits_{ \footnotesize{\begin{array}{c} {\boldsymbol \mu}_{all} \succeq {\bf 0} \\ \| {\boldsymbol \mu}_{all} \| = 1 \end{array} }} \left( \begin{array}{l} \nabla \mathcal{L}({\bf u})^T \nabla \mathcal{L}({\bf u}) +  \displaystyle \sum\limits_{j = 1}^{n_{g_p}} \left[ \mu_{p,j} g_{p,j} ({\bf u}) \right]^2 +  \displaystyle \sum\limits_{j = 1}^{n_g} \left[ \mu_j g_{j} ({\bf u}) \right]^2 \\
+\displaystyle \sum\limits_{i = 1}^{n_u} \left[ ( \zeta^L_i (u^L_i - u_i) )^2 + ( \zeta^U_i (u_i - u^U_i) )^2  \right] \end{array} \right),
\end{equation}

\noindent subject to the additional constraint $\| {\boldsymbol \mu}_{all} \| = 1$ ($\| \cdot \|$ denoting any standard norm), which ensures ${\boldsymbol \mu}_{all} \neq {\bf 0}$ while bounding the magnitude of the multiplier vector away from zero -- otherwise, $\mathcal{E} ({\bf u})$ could be made arbitrarily small for any primally feasible ${\bf u}$ by simply choosing very small multipliers. It should be clear that $\mathcal{E} ({\bf u}^*) = 0$ since one can uniformly scale the multipliers to obtain $\| {\boldsymbol \mu}_{all} \| = 1$ without invalidating (\ref{eq:KKT}).

Denoting by ${\bf u}_\infty$ the first point where the projection becomes infeasible for some choice of ${\boldsymbol \epsilon}_{p}, {\boldsymbol \epsilon}, {\boldsymbol \delta}_{g_p}, {\boldsymbol \delta}_{g}, \delta_\phi$ -- thereby forcing the iterates to converge to ${\bf u}_\infty$ -- we now prove that the proposed scheme converges abritrarily close to an FJ point in the FJ-error sense as the projection parameters are made arbitrarily small, i.e., that $\mathcal{E} ({\bf u}_\infty) \rightarrow 0$ as ${\boldsymbol \epsilon}_{p}, {\boldsymbol \epsilon}, {\boldsymbol \delta}_{g_p}, {\boldsymbol \delta}_{g}, \delta_\phi \downarrow {\bf 0}$.

\begin{theorem}[Convergence to an FJ point in the FJ-error sense]
\label{thm:converge}
Let the project-and-filter approach of (\ref{eq:proj}) and (\ref{eq:inputfilter}) be applied at every experimental iteration where the projection is feasible, with $K_k$ chosen as the maximum value on the interval $[0,1]$ subject to the limitations of (\ref{eq:SCFO1i}), (\ref{eq:SCFO2i}), and (\ref{eq:SCFO7i}), and let ${\bf u}_{k+1} := {\bf u}_k$ if the projection is not feasible, with ${\bf u}_\infty$ denoting the point where this occurs. It follows that

\begin{equation}\label{eq:KKTerr0}
\mathop {\lim} \limits_{{\boldsymbol \epsilon}_{p}, {\boldsymbol \epsilon}, {\boldsymbol \delta}_{g_p}, {\boldsymbol \delta}_{g}, \delta_\phi \downarrow {\bf 0}} \mathcal{E}({\bf u}_\infty) = 0.
\end{equation}

\end{theorem}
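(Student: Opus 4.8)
The plan is to exploit the fact that at ${\bf u}_\infty$ the projection (\ref{eq:proj}) is infeasible, while every constraint appearing in (\ref{eq:proj}) is \emph{affine} in ${\bf u}$. Infeasibility of a finite system of affine inequalities is equivalent, by a standard theorem of the alternative (Farkas' lemma / Gale's theorem), to the existence of a nonzero vector of nonnegative multipliers that annihilates the constraint gradients while producing a strictly negative combination of the right-hand sides. My aim is to show that this very multiplier vector, once normalized, serves as a near-optimal certificate for the FJ-error metric $\mathcal{E}$ of (\ref{eq:KKTerr}), with all residuals it leaves behind controlled entirely by the projection parameters.

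Concretely, I would first write the feasibility system of (\ref{eq:proj}) in the direction variable ${\bf d} = {\bf u} - {\bf u}_\infty$: the rows are $\nabla \phi_p({\bf u}_\infty)^T {\bf d} \leq -\delta_\phi$, then $\nabla g_{p,j}({\bf u}_\infty)^T {\bf d} \leq -\delta_{g_p,j}$ for each active experimental constraint ($g_{p,j}({\bf u}_\infty) \geq -\epsilon_{p,j}$), $\nabla g_{j}({\bf u}_\infty)^T {\bf d} \leq -\delta_{g,j}$ for each active numerical constraint, together with the box rows $-d_i \leq u_{\infty,i} - u^L_i$ and $d_i \leq u^U_i - u_{\infty,i}$. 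The theorem of the alternative then yields nonnegative $\mu_\phi$, $\mu_{p,j}$, $\mu_j$, $\zeta^L_i$, $\zeta^U_i$, not all zero (set $\mu_{p,j}=\mu_j=0$ off the active sets), with
\[
\mu_\phi \nabla \phi_p({\bf u}_\infty) + \sum_j \mu_{p,j}\nabla g_{p,j}({\bf u}_\infty) + \sum_j \mu_j \nabla g_j({\bf u}_\infty) - {\boldsymbol \zeta}^L + {\boldsymbol \zeta}^U = {\bf 0}
\]
and
\[
\sum_i \zeta^L_i(u_{\infty,i}-u^L_i) + \sum_i \zeta^U_i(u^U_i-u_{\infty,i}) < \mu_\phi \delta_\phi + \sum_j \mu_{p,j}\delta_{g_p,j} + \sum_j \mu_j \delta_{g,j}.
\]
The first identity is exactly the stationarity equation $\nabla \mathcal{L}({\bf u}_\infty) = {\bf 0}$ of (\ref{eq:KKT}), so this ${\boldsymbol \mu}_{all}$ makes the $\nabla\mathcal{L}$ term of (\ref{eq:KKTerr}) vanish identically. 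Both relations are homogeneous, so I may rescale to $\|{\boldsymbol \mu}_{all}\| = 1$ without disturbing them; for any standard norm this also forces every component to be $\leq 1$.

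Next I would bound the complementary-slackness residuals in (\ref{eq:KKTerr}). For the inequality-constraint terms, $\mu_{p,j} \neq 0$ only on the active set, where strict feasibility (established in Section \ref{sec:scfo}) gives $-\epsilon_{p,j} \leq g_{p,j}({\bf u}_\infty) < 0$, so $[\mu_{p,j} g_{p,j}({\bf u}_\infty)]^2 \leq \epsilon_{p,j}^2$; likewise $[\mu_j g_j({\bf u}_\infty)]^2 \leq \epsilon_j^2$. The box terms are the delicate point, since the bounds enter (\ref{eq:proj}) unconditionally and $\zeta^L_i,\zeta^U_i$ may be positive even where a bound is slack. Here the strict right-hand-side inequality rescues the argument: each summand on its left is nonnegative and its right side is at most $\Delta := \delta_\phi + \sum_j \delta_{g_p,j} + \sum_j \delta_{g,j}$ (using the component bound $\leq 1$), so every product $\zeta^L_i(u_{\infty,i}-u^L_i)$ and $\zeta^U_i(u^U_i-u_{\infty,i})$ is itself $\leq \Delta$, whence the corresponding squared residuals in (\ref{eq:KKTerr}) are each $\leq \Delta^2$.

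Finally, because $\mathcal{E}$ is a \emph{minimum} over admissible multipliers, the exhibited certificate gives $\mathcal{E}({\bf u}_\infty) \leq \sum_j \epsilon_{p,j}^2 + \sum_j \epsilon_j^2 + 2 n_u \Delta^2$, a bound holding uniformly in the (parameter-dependent) location of ${\bf u}_\infty$ and tending to $0$ as ${\boldsymbol \epsilon}_p, {\boldsymbol \epsilon}, {\boldsymbol \delta}_{g_p}, {\boldsymbol \delta}_g, \delta_\phi \downarrow {\bf 0}$; together with $\mathcal{E} \geq 0$ this yields (\ref{eq:KKTerr0}). The main obstacle I anticipate is precisely the box-constraint complementary slackness: unlike the inequality constraints, the bounds are not $\epsilon$-filtered, so their contribution cannot be controlled by proximity to activity and must instead be extracted from the strict inequality furnished by the theorem of the alternative. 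Recognizing and exploiting that strict inequality — rather than merely its accompanying equality — is the crux of the proof.
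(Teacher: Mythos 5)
Your proof is correct, and it shares the paper's overall skeleton --- infeasibility of (\ref{eq:proj}) at ${\bf u}_\infty$, a theorem of the alternative producing a nonzero, nonnegative multiplier vector that satisfies the stationarity equation, and the use of that vector (normalized) as a feasible certificate in the minimization defining $\mathcal{E}$ --- but it diverges at exactly the point you identify as the crux: the complementary slackness of the bound constraints. The paper handles this by first arguing that the \emph{inactive} bounds can be deleted from the infeasible system: via a scaling argument ($\Delta {\bf u}_\infty := \alpha \Delta {\bf u}_\infty^*$ with $\alpha$ small) it shows that, for ${\boldsymbol \delta}_{g_p}, {\boldsymbol \delta}_{g}, \delta_\phi$ below certain thresholds, infeasibility of the full system forces infeasibility of the relaxed system retaining only the active bounds; Gale's theorem applied to that relaxed system then gives $\tilde \zeta^L_i = \tilde \zeta^U_i = 0$ on the inactive bounds, so every box complementary-slackness term vanishes exactly. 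You instead keep all box rows and exploit the \emph{strict inequality} ${\bf b}^T {\boldsymbol \lambda} < 0$ furnished by the alternative --- a piece of information the paper's proof never uses --- to bound each nonnegative product $\zeta^L_i(u_{\infty,i} - u^L_i)$ and $\zeta^U_i(u^U_i - u_{\infty,i})$ by $\Delta = \delta_\phi + \sum_j \delta_{g_p,j} + \sum_j \delta_{g,j}$ after normalizing so that every component of ${\boldsymbol \mu}_{all}$ is at most $1$. This buys you two things: the resulting bound $\mathcal{E}({\bf u}_\infty) \leq \sum_j \epsilon_{p,j}^2 + \sum_j \epsilon_j^2 + 2 n_u \Delta^2$ is unconditional (no ``sufficiently small ${\boldsymbol \delta}$'' threshold, and hence no concern that the paper's thresholds ${\boldsymbol \delta}^L$ themselves depend on the parameter-dependent location of ${\bf u}_\infty$), and the argument is noticeably shorter. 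The price is a slightly weaker residual --- the paper's final bound involves only the ${\boldsymbol \epsilon}$'s, whereas yours also carries the ${\boldsymbol \delta}$'s through $\Delta$ --- but both vanish in the joint limit, so (\ref{eq:KKTerr0}) follows either way.
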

\begin{proof}
The infeasibility of the projection at ${\bf u}_\infty$ indicates that the system of inequalities

\begin{equation}\label{eq:globproof1}
\begin{array}{rcl}
\left[ \begin{array}{c}
\nabla {\bf g}_p^\epsilon ({\bf u}_\infty)^T \\
\nabla {\bf g}^\epsilon ({\bf u}_\infty)^T \\
\nabla \phi_p ({\bf u}_\infty)^T
\end{array} \right] ({\bf u} - {\bf u}_\infty) & \preceq &
\left[ \begin{array}{c}
-{\boldsymbol \delta}_{g_p}^\epsilon \\
-{\boldsymbol \delta}_{g}^\epsilon \\
-\delta_\phi
\end{array} \right] \\
-{\bf u} + {\bf u}^L & \preceq & {\bf 0} \\
{\bf u} - {\bf u}^U & \preceq & {\bf 0}
\end{array}
\end{equation}

\noindent has no solution. Here, $\nabla {\bf g}_p^\epsilon$ and $\nabla {\bf g}^\epsilon$ are the Jacobian matrices of the ``$\epsilon$-active'' experimental and numerical constraint sets, respectively, with ${\boldsymbol \delta}_{g_p}^\epsilon$ and ${\boldsymbol \delta}_g^\epsilon$ the corresponding vectors of projection parameters -- note that this is simply a different way of writing the constraints of (\ref{eq:proj}). So as to lump those bound constraints that are active into the upper set of inequalities, we denote by ${\bf u}_a$ the subvector of decision variables whose lower bounds are active at ${\bf u}_\infty$, by ${\bf u}^a$ the subvector of decision variables whose upper bounds are active at ${\bf u}_\infty$, and use $(\tilde \cdot)$ to denote their complements, i.e.:

\begin{equation}\label{eq:globproof1a}
({\bf u}_{a})_\infty = ({\bf u}_a)^L, \;\; ({\bf u}^a)_\infty = ({\bf u}^{a})^U, \;\; (\tilde {\bf u}_{a})_\infty \succ (\tilde {\bf u}_a)^L, \;\; (\tilde {\bf u}^a)_\infty \prec (\tilde {\bf u}^{a})^U,
\end{equation}

\noindent which allows us to rewrite the bound constraints as

\begin{equation}\label{eq:globproof2}
\begin{array}{rcl}
-{\bf u} + {\bf u}^L & \preceq & {\bf 0} \\
{\bf u} - {\bf u}^U & \preceq & {\bf 0}
\end{array} \Leftrightarrow
\begin{array}{rcl}
-{\bf u}_a + ({\bf u}_a)_\infty & \preceq & {\bf 0} \\
{\bf u}^a - ({\bf u}^a)_\infty & \preceq & {\bf 0} \\
-\tilde {\bf u}_a + (\tilde {\bf u}_a)^L & \preceq & {\bf 0} \\
\tilde {\bf u}^a - (\tilde {\bf u}^a)^U & \preceq & {\bf 0}.
\end{array}
\end{equation}

As ${\bf u}_a$ and ${\bf u}^a$ are subvectors of ${\bf u}$, there clearly exists a (very sparse) linear mapping, denoted by ${\bf U}$, so that

\begin{equation}\label{eq:globproof3}
\begin{array}{rcl}
-{\bf u}_a + ({\bf u}_a)_\infty & \preceq & {\bf 0} \\
{\bf u}^a - ({\bf u}^a)_\infty & \preceq & {\bf 0}
\end{array} \Leftrightarrow
\begin{array}{rcl}
{\bf U} ({\bf u} - {\bf u}_\infty) & \preceq & {\bf 0}
\end{array},
\end{equation}

\noindent and which is easily shown to consist of rows where all but one of the elements are 0, and the one non-zero element is either $1$ or $-1$, depending on whether the active bound is upper or lower, respectively, with its index corresponding to the index of the active bound constraint. This now allows (\ref{eq:globproof1}) to be rewritten as

\begin{equation}\label{eq:globproof4}
\begin{array}{rcl}
\left[ \begin{array}{c}
\nabla {\bf g}_p^\epsilon ({\bf u}_\infty)^T \\
\nabla {\bf g}^\epsilon ({\bf u}_\infty)^T \\
\nabla \phi_p ({\bf u}_\infty)^T \\
{\bf U}
\end{array} \right] ({\bf u} - {\bf u}_\infty) & \preceq &
\left[ \begin{array}{c}
-{\boldsymbol \delta}_{g_p}^\epsilon \\
-{\boldsymbol \delta}_g^\epsilon \\
-\delta_\phi \\
{\bf 0}
\end{array} \right] \\
-\tilde {\bf u}_a + (\tilde {\bf u}_a)^L & \preceq & {\bf 0} \\
\tilde {\bf u}^a - (\tilde {\bf u}^a)^U & \preceq & {\bf 0}.
\end{array}
\end{equation}

It is first shown that the bound constraints corresponding to the bounds that are inactive at ${\bf u}_\infty$ may be removed from analysis. Considering the relaxed projection

\begin{equation}\label{eq:globproof4a}
\begin{array}{rcl}
\left[ \begin{array}{c}
\nabla {\bf g}_p^\epsilon ({\bf u}_\infty)^T \\
\nabla {\bf g}^\epsilon ({\bf u}_\infty)^T \\
\nabla \phi_p ({\bf u}_\infty)^T \\
{\bf U}
\end{array} \right] ({\bf u} - {\bf u}_\infty) & \preceq &
\left[ \begin{array}{c}
-{\boldsymbol \delta}_{g_p}^\epsilon \\
-{\boldsymbol \delta}_g^\epsilon \\
-\delta_\phi \\
{\bf 0}
\end{array} \right],
\end{array}
\end{equation}

\noindent we will prove that $\exists {\boldsymbol \delta}_{g_p}^L, {\boldsymbol \delta}_{g}^L, \delta_\phi^L \succ {\bf 0}$ such that, for ${\boldsymbol \delta}_{g_p} \preceq {\boldsymbol \delta}_{g_p}^L,\; {\boldsymbol \delta}_{g} \preceq {\boldsymbol \delta}_{g}^L,\; \delta_\phi \leq \delta_\phi^L$,

\begin{equation}\label{eq:globproof4b}
(\ref{eq:globproof4})\;{\rm infeasible} \Rightarrow (\ref{eq:globproof4a})\;{\rm infeasible}
\end{equation}

\noindent by proving its contrapositive, i.e.:

\begin{equation}\label{eq:globproof4c}
(\ref{eq:globproof4a})\;{\rm feasible} \Rightarrow (\ref{eq:globproof4})\;{\rm feasible}.
\end{equation}

Defining $\Delta {\bf u}_\infty = {\bf u} - {\bf u}_\infty$ and, analogously, $(\Delta \tilde {\bf u}_a)_\infty = \tilde {\bf u}_a - (\tilde {\bf u}_a)_\infty$, $(\Delta \tilde {\bf u}^a)_\infty = \tilde {\bf u}^a - (\tilde {\bf u}^a)_\infty$, let us consider the inactive bound constraints in the equivalent form

\begin{equation}\label{eq:globproof5}
\begin{array}{rcl}
-(\tilde {\bf u}_a)_\infty - (\Delta \tilde {\bf u}_a)_\infty + (\tilde {\bf u}_a)^L & \preceq & {\bf 0} \\
(\tilde {\bf u}^a)_\infty + (\Delta \tilde {\bf u}^a)_\infty - (\tilde {\bf u}^a)^U & \preceq & {\bf 0}.
\end{array}
\end{equation}

Let $\Delta {\bf u}^*_\infty$ denote a feasible solution of (\ref{eq:globproof4a}), so that

\begin{equation}\label{eq:globproof6}
\begin{array}{rcl}
\left[ \begin{array}{c}
\nabla {\bf g}_p^\epsilon ({\bf u}_\infty)^T \\
\nabla {\bf g}^\epsilon ({\bf u}_\infty)^T \\
\nabla \phi_p ({\bf u}_\infty)^T \\
{\bf U}
\end{array} \right] \Delta {\bf u}^*_\infty & \preceq &
 \left[ \begin{array}{c}
- {\boldsymbol \delta}_{g_p}^\epsilon \\
- {\boldsymbol \delta}_g^\epsilon \\
- \delta_\phi \\
{\bf 0}
\end{array} \right].
\end{array}
\end{equation}

Since

\begin{equation}\label{eq:globproof7}
\begin{array}{rcl}
\left[ \begin{array}{c}
\nabla {\bf g}_p^\epsilon ({\bf u}_\infty)^T \\
\nabla {\bf g}^\epsilon ({\bf u}_\infty)^T \\
\nabla \phi_p ({\bf u}_\infty)^T \\
{\bf U}
\end{array} \right] \alpha \Delta {\bf u}^*_\infty & \preceq &
\alpha \left[ \begin{array}{c}
- {\boldsymbol \delta}_{g_p}^\epsilon \\
- {\boldsymbol \delta}_g^\epsilon \\
- \delta_\phi \\
{\bf 0}
\end{array} \right]
\end{array}
\end{equation}

\noindent clearly holds for any $\alpha \geq 0$, we can always choose ${\boldsymbol \delta}_{g_p} := \alpha {\boldsymbol \delta}_{g_p}$, ${\boldsymbol \delta}_{g} := \alpha {\boldsymbol \delta}_{g}$, $\delta_\phi := \alpha \delta_\phi$ so that $\Delta {\bf u}_\infty := \alpha \Delta {\bf u}^*_\infty$ solves (\ref{eq:globproof4a}) for this choice of ${\boldsymbol \delta}_{g_p}, {\boldsymbol \delta}_{g}, \delta_\phi$.

Substituting this $\Delta {\bf u}_\infty$ into the bound constraints (\ref{eq:globproof5}) yields

\begin{equation}\label{eq:globproof8}
\begin{array}{rcl}
-(\tilde {\bf u}_a)_\infty - \alpha (\Delta \tilde {\bf u}_a)^*_\infty + (\tilde {\bf u}_a)^L & \preceq & {\bf 0} \\
(\tilde {\bf u}^a)_\infty + \alpha (\Delta \tilde {\bf u}^a)^*_\infty - (\tilde {\bf u}^a)^U & \preceq & {\bf 0}.
\end{array}
\end{equation}

\noindent Since $-(\tilde {\bf u}_a)_\infty + (\tilde {\bf u}_a)^L \prec {\bf 0}$ and $(\tilde {\bf u}^a)_\infty - (\tilde {\bf u}^a)^U \prec {\bf 0}$ by definition, it follows that there exists a sufficiently small $\alpha := \alpha^L > 0$ such that these constraints are satisfied. As $\Delta {\bf u}_\infty := \alpha^L \Delta {\bf u}^*_\infty$ is feasible for (\ref{eq:globproof4a}) for the choice of parameters ${\boldsymbol \delta}_{g_p} := {\boldsymbol \delta}_{g_p}^L = \alpha^L {\boldsymbol \delta}_{g_p}$, ${\boldsymbol \delta}_{g} := {\boldsymbol \delta}_{g}^L = \alpha^L {\boldsymbol \delta}_{g}$, $\delta_\phi := \delta_\phi^L = \alpha^L \delta_\phi$ and as this feasibility is retained for any smaller ${\boldsymbol \delta}_{g_p}$, ${\boldsymbol \delta}_{g}$, $\delta_\phi$ due to the constraints being less stringent, it follows that a feasible solution for (\ref{eq:globproof4}) must exist for ${\boldsymbol \delta}_{g_p} \preceq {\boldsymbol \delta}_{g_p}^L,\; {\boldsymbol \delta}_{g} \preceq {\boldsymbol \delta}_{g}^L,\; \delta_\phi \leq \delta_\phi^L$, which proves (\ref{eq:globproof4c}) and thus (\ref{eq:globproof4b}). It thus follows that (\ref{eq:globproof4a}) must be infeasible if (\ref{eq:globproof4}) is infeasible for ${\boldsymbol \delta}_{g_p} \preceq {\boldsymbol \delta}_{g_p}^L,\; {\boldsymbol \delta}_{g} \preceq {\boldsymbol \delta}_{g}^L,\; \delta_\phi \leq \delta_\phi^L$, which must occur as ${\boldsymbol \delta}_{g_p}, {\boldsymbol \delta}_{g}, \delta_\phi \downarrow {\bf 0}$.

From Gale's Theorem (see, e.g., Th. 22.1 in \cite{Rockafellar1970}), infeasibility in (\ref{eq:globproof4a}) implies that there exist coefficients $\tilde \mu_\phi \in \mathbb{R}_+$, $\tilde {\boldsymbol \mu}_{p} \in \mathbb{R}^{n_{g_p}}_+$, $\tilde {\boldsymbol \mu} \in \mathbb{R}^{n_{g}}_+$, $\tilde {\boldsymbol \zeta}^L \in \mathbb{R}^{n_u}_+$, and $\tilde {\boldsymbol \zeta}^U \in \mathbb{R}^{n_u}_+$, collectively denoted by $\tilde {\boldsymbol \mu}_{all}$, with at least one of these coefficients strictly positive, such that

\begin{equation}\label{eq:globproof9}
\nabla \mathcal{\tilde L} ({\bf u}_\infty) =  \tilde \mu_\phi \nabla \phi_p ({\bf{u}}_\infty) + \displaystyle \sum\limits_{j = 1}^{n_{g_p}}  { \tilde \mu_{p,j} \nabla g_{p,j} ({\bf{u}}_\infty)} + \displaystyle \sum\limits_{j = 1}^{n_g}  {\tilde \mu_{j} \nabla g_{j} ({\bf{u}}_\infty)} - \tilde {\boldsymbol \zeta}^L + \tilde {\boldsymbol \zeta}^U = {\bf 0},
\end{equation}

\noindent where, so as to maintain the consequence of Gale's Theorem and to exclude the elements not present in (\ref{eq:globproof4a}), we force $\tilde \mu_{p,j} = 0, \; \forall j : g_{p,j} ({\bf u}_\infty) < - \epsilon_{p,j}$, $\tilde \mu_{j} = 0, \; \forall j : g_{j} ({\bf u}_\infty) < - \epsilon_{j}$, $\tilde \zeta_{i}^L = 0, \; \forall i : u_{\infty,i} > u_{i}^L$, $\tilde \zeta_{i}^U = 0, \; \forall i : u_{\infty,i} < u_{i}^U$. Here, we have written the negative spanning implied by Gale's Theorem as an analogue to the stationarity condition in (\ref{eq:KKT}), using $\mathcal {\tilde L}$ to denote the analogue to the Lagrangian. Without loss of generality, let us, without changing notation, scale (\ref{eq:globproof9}) to force $\| \tilde {\boldsymbol \mu}_{all} \| = 1$, which is possible since $\tilde {\boldsymbol \mu}_{all}$ contains at least one strictly positive element.

It is now possible to use (\ref{eq:globproof9}) to redefine the Lagrangian gradient at ${\bf u}_\infty$ as

\begin{equation}\label{eq:globproof10}
\begin{array}{l}
\nabla \mathcal{L} ({\bf u}_\infty) = \nabla \mathcal{L} ({\bf u}_\infty) - \nabla \mathcal{\tilde L} ({\bf u}_\infty) = \\
\hspace{20mm} (\mu_\phi - \tilde \mu_\phi) \nabla \phi_p ({\bf{u}}_\infty) + \displaystyle \sum\limits_{j = 1}^{n_{g_p}}  { (\mu_{p,j} - \tilde \mu_{p,j}) \nabla g_{p,j} ({\bf{u}}_\infty)} \\
\hspace{20mm} + \displaystyle \sum\limits_{j = 1}^{n_g}  {(\mu_{j} - \tilde \mu_{j}) \nabla g_{j} ({\bf{u}}_\infty)} - ({\boldsymbol \zeta}^L - \tilde {\boldsymbol \zeta}^L) + ( {\boldsymbol \zeta}^U - \tilde {\boldsymbol \zeta}^U).
\end{array}
\end{equation}

The proof is completed by upper bounding the FJ error as follows:

$$
\begin{array}{l}
\mathop {\min} \limits_{ \footnotesize{\begin{array}{c} {\boldsymbol \mu}_{all} \succeq {\bf 0} \\ \| {\boldsymbol \mu}_{all} \| = 1 \end{array} }} \left( \begin{array}{l} \nabla \mathcal{L}({\bf u}_\infty)^T \nabla \mathcal{L}({\bf u}_\infty) +  \displaystyle \sum\limits_{j = 1}^{n_{g_p}} \left[ \mu_{p,j} g_{p,j} ({\bf u}_\infty) \right]^2 +  \displaystyle \sum\limits_{j = 1}^{n_g} \left[ \mu_j g_{j} ({\bf u}_\infty) \right]^2 \\
+\displaystyle \sum\limits_{i = 1}^{n_u} \left[ ( \zeta^L_i (u^L_i - u_{\infty,i}) )^2 + ( \zeta^U_i (u_{\infty,i} - u^U_i) )^2  \right] \end{array} \right) \\
\leq \mathop {\min} \limits_{ \footnotesize{\begin{array}{c} {\boldsymbol \mu}_{all} = \tilde {\boldsymbol \mu}_{all} \end{array} }} \left( \begin{array}{l} \nabla \mathcal{L}({\bf u}_\infty)^T \nabla \mathcal{L}({\bf u}_\infty) +  \displaystyle \sum\limits_{j = 1}^{n_{g_p}} \left[ \mu_{p,j} g_{p,j} ({\bf u}_\infty) \right]^2 +  \displaystyle \sum\limits_{j = 1}^{n_g} \left[ \mu_j g_{j} ({\bf u}_\infty) \right]^2 \\
+\displaystyle \sum\limits_{i = 1}^{n_u} \left[ ( \zeta^L_i (u^L_i - u_{\infty,i}) )^2 + ( \zeta^U_i (u_{\infty,i} - u^U_i) )^2  \right] \end{array} \right) \\
= \displaystyle \sum\limits_{j = 1}^{n_{g_p}} \left[ \tilde \mu_{p,j} g_{p,j} ({\bf u}_\infty) \right]^2 +  \displaystyle \sum\limits_{j = 1}^{n_g} \left[ \tilde \mu_j g_{j} ({\bf u}_\infty) \right]^2  + \sum\limits_{i = 1}^{n_u} \left[ ( \tilde \zeta^L_i (u^L_i - u_{\infty,i}) )^2 + ( \tilde \zeta^U_i (u_{\infty,i} - u^U_i) )^2  \right] \\
= \displaystyle \sum\limits_{j = 1}^{n_{g_p}} \left[ \tilde \mu_{p,j} g_{p,j} ({\bf u}_\infty) \right]^2 +  \displaystyle \sum\limits_{j = 1}^{n_g} \left[ \tilde \mu_j g_{j} ({\bf u}_\infty) \right]^2 \\
\leq \displaystyle \sum\limits_{j = 1}^{n_{g_p}} \left[ \tilde \mu_{p,j} \epsilon_{p,j} \right]^2 +  \displaystyle \sum\limits_{j = 1}^{n_g} \left[ \tilde \mu_j \epsilon_j \right]^2,
\end{array}
$$

\noindent with the following step-by-step justifications:

\begin{enumerate}
\item Setting the Lagrange multipliers to the $(\tilde \cdot)$ analogues is equivalent to taking the minimum over a smaller feasible set (${\boldsymbol \mu}_{all} = \tilde {\boldsymbol \mu}_{all}, \| {\boldsymbol \mu}_{all} \| = 1$ as opposed to ${\boldsymbol \mu}_{all} \succeq {\bf 0}, \| {\boldsymbol \mu}_{all} \| = 1$), which can only increase the FJ error.
\item Evaluated for this choice of Lagrange multipliers, the $\nabla \mathcal{L} ({\bf u}_\infty)$ term is clearly {\bf 0} (see (\ref{eq:globproof10})).
\item The complementary slackness terms with respect to the bound constraints are all 0 since (a) $\tilde \zeta_i^L$ and $\tilde \zeta_i^U$ are 0 by definition for any bound constraints that are not active, or (b) the $u_i^L - u_{\infty,i}$ or $u_{\infty,i} - u_i^U$ terms are 0 for those that are active.
\item The $\tilde \mu_{p,j}$ and $\tilde \mu_{j}$ coefficients are 0 by definition for any inequality constraints that are not ``$\epsilon$-active''. For those that are, the upper bounds $[g_{p,j} ({\bf u}_\infty)]^2 \leq \epsilon_{p,j}^2$ and $[g_{j} ({\bf u}_\infty)]^2 \leq \epsilon_{j}^2$ follow directly from the $\epsilon$-active property.
\end{enumerate}

We have thus derived an upper bound on the FJ error that is valid for ${\boldsymbol \delta}_{g_p} \preceq {\boldsymbol \delta}_{g_p}^{L}$, ${\boldsymbol \delta}_g \preceq {\boldsymbol \delta}_g^{L}$, and $\delta_\phi \leq \delta_\phi^{L}$ -- which will become the case as ${\boldsymbol \delta}_{g_p}, {\boldsymbol \delta}_{g}, \delta_\phi \downarrow {\bf 0}$ -- and that will go to 0 as ${\boldsymbol \epsilon}_p, {\boldsymbol \epsilon} \downarrow {\bf 0}$. As the FJ error is bounded from below by 0, it follows that the error go to 0 in the limit as well. \end{proof}

This result has a very simple geometric interpretation, in that the projection becomes infeasible when there is no longer a direction, ${\bf u} - {\bf u}_\infty$, that is both locally cost decreasing and locally feasible with some approximation error. As this approximation error goes to 0, the infeasibility of the projection implies the lack of existence of a locally cost decreasing and feasible direction, which is nothing else than the geometric conditions for an FJ point. 

\section{An Adaptive Choice of Projection Parameters}
\label{sec:implement2}

While one may choose the projection parameters prior to applying the proposed method and keep them the same for every experimental iteration, it is not clear what single choice of the parameters would be best. As has been verified previously by the authors \cite{Bunin:12b}, a reasonable strategy is to use larger values in the earlier experiments, as this tends to promote greater decreases in the cost while staying far away from the constraints, and to use lower values once the projection becomes infeasible, since it is only for very small values that the results of Theorem \ref{thm:converge} become meaningful with respect to optimality. In light of this we propose a scheme to adaptively choose the parameters before every experimental iteration:

\vspace{2mm}
\noindent {\bf{Initialization -- Done Only Once Prior to First Experimental Iteration}}
\vspace{2mm}

\begin{enumerate}
\item Define $\overline \epsilon_{p,j} = \overline \delta_{g_p,j} \approx - \mathop {\min} \limits_{{\bf u} \in \mathcal{I}} g_{p,j} ({\bf u})$, $\overline \epsilon_{j} = \overline \delta_{g,j} \approx - \mathop {\min} \limits_{{\bf u} \in \mathcal{I}} g_{j} ({\bf u})$, and $\overline \delta_\phi \approx \phi_p ({\bf u}_0) - \mathop {\min} \limits_{{\bf u} \in \mathcal{I}} \phi_p ({\bf u})$. 
\end{enumerate}

\vspace{2mm}
\noindent {\bf{Search for a Feasible Projection -- Prior to Each Experimental Iteration}}
\vspace{2mm}

\begin{enumerate}
\setcounter{enumi}{1}
\item Set ${\boldsymbol \epsilon}_p := \overline {\boldsymbol \epsilon}_p$, ${\boldsymbol \epsilon} := \overline {\boldsymbol \epsilon}$, $\boldsymbol{\delta}_{g_p} := \boldsymbol{\overline \delta}_{g_p}$, $\boldsymbol{\delta}_{g} := \boldsymbol{\overline \delta}_{g}$, and $\delta_\phi := \overline \delta_\phi$.
\item Check the feasibility of (\ref{eq:proj}) for the given choice of ${\boldsymbol \epsilon}_{p}, {\boldsymbol \epsilon}, {\boldsymbol \delta}_{g_p}, {\boldsymbol \delta}_{g}, \delta_\phi$ by solving a linear programming feasibility problem. If no solution is found and $\delta_\phi \geq \overline \delta_\phi / 2^{10}$, set ${\boldsymbol \epsilon}_p := {\boldsymbol \epsilon}_p/2$, ${\boldsymbol \epsilon} := {\boldsymbol \epsilon}/2$, $\boldsymbol{\delta}_{g_p} := \boldsymbol{\delta}_{g_p}/2$, $\boldsymbol{\delta}_{g} := \boldsymbol{\delta}_{g}/2$, $\delta_\phi := \delta_\phi/2$ and repeat this step. If no solution is found and $\delta_\phi < \overline \delta_\phi / 2^{10}$, go to Step 5. Otherwise, if a feasible solution is found, proceed to Step 4.

\item Solve (\ref{eq:proj}) with the resulting ${\boldsymbol \epsilon}_{p}, {\boldsymbol \epsilon}, {\boldsymbol \delta}_{g_p}, {\boldsymbol \delta}_{g}, \delta_\phi$ to obtain $\bar {\bf u}_{k+1}^*$.

\end{enumerate}

\vspace{2mm}
\noindent {\bf{Termination -- Declared Convergence to an FJ Point}}
\vspace{2mm}

\begin{enumerate}
\setcounter{enumi}{4}
\item Set $\bar {\bf u}_{k+1}^* := {\bf u}_{k}$.
\end{enumerate}

\vspace{5mm}

The first step of this scheme acts as a sort of scaling by setting the maximum projection parameters as being approximately equal to the sizes of the respective function ranges. These approximations, in many cases based on some sort of engineering knowledge, may be rather brute, since what essentially needs to be correct is the order of magnitude. Clearly, using such an initial setting (Step 2) will always lead the projection to attempt to decrease the linear approximations of both the cost and the constraints by the maximum amount possible, i.e., attempting to obtain large decreases in the cost while staying away from all of the problem constraints. While desired, this will almost certainly be impossible for most constrained problems, thus leading to the projection parameters being halved until the projection becomes feasible. If this reduction has taken place a certain number of times (here, we choose 10) and the projection is still infeasible, then the FJ error is considered to be sufficiently small so as to declare convergence to an FJ point. While one could argue against the rigor of this approach -- by, for example, providing examples where the FJ error is still large even after $\overline \delta_\phi$ and the other parameters have been cut by $2^{10}$ -- we believe such a scheme to be sufficient for most problems.

\section{Examples}
\label{sec:example}

We investigate the potential strengths and weaknesses of the proposed framework by considering two constructed example problems, where all experimental functions are given analytically and numerical evaluations are used in place of actual experiments so as to make performance analysis possible and computationally cheap.

\subsection{Minimizing an Experimental Function over an Experimentally Constrained Feasible Region}

Consider the following experimental optimization problem:

\begin{equation}\label{eq:exprob}
\begin{array}{rl}
\mathop {{\rm{minimize}}}\limits_{u_1,u_2} & \phi_{p}({\bf{u}}) := (u_1-0.5)^2 + (u_2-0.4)^2 \\
{\rm{subject}}\hspace{1mm}{\rm{to}} & g_{p,1}({\bf{u}}) := -6u^2_1 - 3.5u_1 + u_2 -0.6 \le 0 \vspace{1mm} \\
 & g_{p,2}({\bf{u}}) := 2u^2_1 + 0.5u_1 + u_2 -0.75 \le 0 \vspace{1mm} \\
 & g_{1}({\bf{u}}) := -u^2_1 - (u_2-0.15)^2 +0.01 \le 0 \vspace{1mm} \\
& -0.5 \leq u_1 \leq 0.5 \\
& 0 \leq u_2 \leq 0.8,
\end{array}
\end{equation}

\noindent with the initial experiment at ${\bf u}_0 := (-0.45,0.05)$. The following choice of Lipschitz constants is valid (i.e., it satisfies (\ref{eq:lipcon}) and (\ref{eq:lipcon2})) and is used to enforce (\ref{eq:SCFO1i}) and (\ref{eq:SCFO7i}):

\begin{equation}\label{eq:exlip}
\begin{array}{ll}
\kappa_{p,11} := 10, & \kappa_{p,12} := 2 \\
\kappa_{p,21} := 3, & \kappa_{p,22} := 2 \\
M_{\phi,11} := 3, & M_{\phi,12} := 1 \\
M_{\phi,21} := 1, & M_{\phi,22} := 3.
\end{array}
\end{equation}

The upper bounds on the projection parameters are chosen as $\overline \epsilon_{p,1} := \overline \delta_{g_p,1} := 4$, $\overline \epsilon_{p,2} := \overline \delta_{g_p,2} := 2$, $\overline \epsilon_{1} := \overline \delta_{g,1} := 1$, and $\overline \delta_\phi := 1$. Since the choice of the optimization target ${\bf u}_{k+1}^*$ does not affect the key properties of the method, we simply set, somewhat arbitrarily, ${\bf u}_{k+1}^* := (0,0.4)$ (i.e., the center of $\mathcal{I}$) for all experimental iterations.

The chain of experiments generated by the project-and-filter approach with the adaptive choice of projection parameters is shown in Fig. \ref{fig:ex}, from where we observe that enforcing Conditions (\ref{eq:SCFO1})-(\ref{eq:SCFO7}) via the project-and-filter approach does indeed lead to monotonically improving experiments that never leave the safe (feasible) region while converging extremely close to a local minimum. Note that, while the theory developed does not ensure that the FJ point be a local minimum -- it could, in principle, be a maximum or a saddle point -- the descent nature of the algorithm tends to avoid those FJ points that are not local minima as such points are innately unstable. In this example, there is a second, albeit unstable, FJ point at ${\bf u} = (-0.09,0.11)$, which the algorithm clearly circumvents.

\begin{figure}
\begin{center}
\includegraphics[width=12cm]{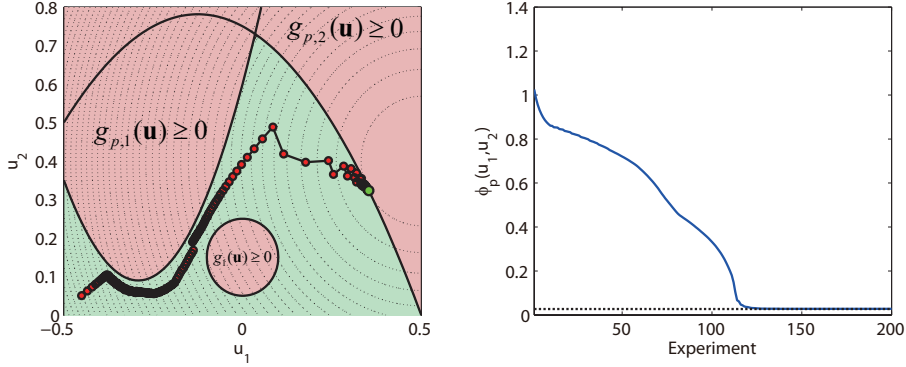}
\caption{Chain of experiments (red points) generated by applying the proposed methodology to Problem (\ref{eq:exprob}). The green point denotes the only local minimum. The dotted lines on the left plot denote the contours of the cost function, while the constant dotted line on the right denotes the cost value at the minimum.}
\label{fig:ex}
\end{center}
\end{figure}

Problem (\ref{eq:exprob}) is thus a good example of a problem for which the proposed framework is extremely appropriate, since the chain of experiments safely converges to the minimum despite the presence of experimental constraints and a feasible set that is not ``nice''.

\subsection{Minimizing the Experimental Rosenbrock Function Over a Unit Box}

To give an example of a problem for which the proposed framework may not be appropriate, we consider minimizing the Rosenbrock function \citep{Rosenbrock1960} subject to bound constraints:

\begin{equation}\label{eq:rosen}
\begin{array}{rl}
\mathop {{\rm{minimize}}}\limits_{u_1,u_2} & \phi_{p}({\bf{u}}) := (1 - u_1)^2 + 100(u_2-u_1^2)^2 \\
{\rm{subject}}\hspace{1mm}{\rm{to}} & 0 \leq u_1 \leq 1 \vspace{1mm} \\
 & 0 \leq u_2 \leq 1, 
\end{array}
\end{equation}

\noindent with the initial experiment taken as ${\bf u}_0 := (0,0)$. The required Lipschitz constants are set as

\begin{equation}\label{eq:exlip2}
\begin{array}{ll}
M_{\phi,11} := 1500, & M_{\phi,12} := 500 \\
M_{\phi,21} := 500, & M_{\phi,22} := 300,
\end{array}
\end{equation}

\noindent with $\overline \delta_\phi$ chosen as 1. For simplicity, we set the target ${\bf u}_{k+1}^*$ as the true optimum, ${\bf u}^* = (1,1)$.

A chain of 5,000 experiments is simulated and is shown to converge monotonically to the optimum as expected (Fig. \ref{fig:rosenex}). As this problem provides a good example of being able to achieve significant reductions in cost (and FJ error) without converging completely to the optimum, the termination criterion of the adaptive algorithm of Section 6 is modified and the versions $\delta_\phi < \overline \delta_\phi/2^{20}$, $\delta_\phi < \overline \delta_\phi/2^{10}$, and $\delta_\phi < \overline \delta_\phi/2^{5}$ are all tested, with the FJ errors of $3.66 \cdot 10^{-4}$, $8.8 \cdot 10^{-3}$, and $1.23 \cdot 10^{-1}$ attained at the final point for the three cases, respectively -- the FJ error at the initial experiment being equal to 4.

\begin{figure}
\begin{center}
\includegraphics[width=12cm]{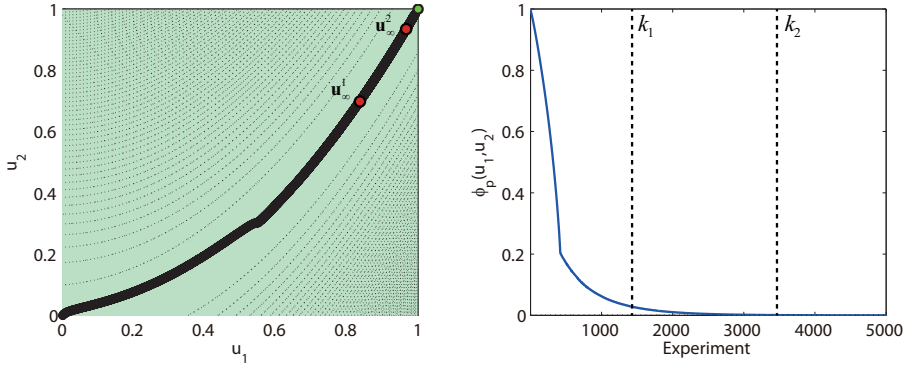}
\caption{Chain of experiments generated by applying the proposed methodology to Problem (\ref{eq:rosen}) with the modified termination criterion of $\delta_\phi < \overline \delta_\phi/2^{20}$. The effect of using different termination criteria is also shown: the algorithm converges to ${\bf u}_{\infty}^1$ in $k_1$ experiments when $\delta_\phi < \overline \delta_\phi/2^5$ is used as the criterion, and to ${\bf u}_{\infty}^2$ in $k_2$ experiments when $\delta_\phi < \overline \delta_\phi/2^{10}$ is used.}
\label{fig:rosenex}
\end{center}
\end{figure}

From a purely theoretical perspective, the proposed framework is no less successful in this example than in the one prior. However, the number of experiments needed to obtain good cost reductions (several hundred) and to ultimately reach the optimal cost value (several thousand) is unlikely to be acceptable in practice for a problem with only two degrees of freedom and, presumably, expensive experiments. A practitioner could reasonably argue that a simple 2-factorial experiment design would, though lacking theoretical rigor, find the optimum in just four experiments for this problem. Alternatively, many of the methods discussed in Section 2 that are provably convergent for bound constrained problems could also be used, and would likely require fewer experiments. When the performance of the algorithm is judged purely by the number of experiments needed to find the optimum, these arguments are largely valid and we concede that the proposed framework, whose major strength is the handling of experimental constraints, is a poor choice.

If, however, the performance is judged not only by the number of experiments but also by their suboptimality, then there is a silver lining. The pathology of the observed slow convergence should be clear -- the gradients of the cost function along the convergence path are very small compared to the Lipschitz constants of the cost function derivatives, leading to extremely small values of $K_k$ by Condition (\ref{eq:SCFO7i}) and thus to very small steps. While debilitating in terms of convergence speed, it is, at the same time, this conservatism that guarantees that the cost improve monotonically from experiment to experiment for a function where this is very difficult due to extreme nonlinear behavior. In fact, if one were to bypass this guarantee and allow larger steps, it would be very easy to go into regions of the decision-variable space where the cost function climbed drastically, which could be reflected by major economic losses in practice. From this point of view, the same 2-factorial design that would find the optimum in only four experiments would also generate two \emph{extremely} suboptimal experiments at ${\bf u} = (0,1)$ and ${\bf u} = (1,0)$, the losses from which may be so astronomical as to make smooth convergence, even if in a few thousand iterations, preferable. Such considerations are, of course, application-dependent.

\section{Concluding Remarks}
\label{sec:conclusion}

Sufficient conditions for feasible-side global convergence to a Fritz John (FJ) stationary point of an experimental optimization problem have been proposed, and it has been proven that one can converge arbitrarily close to an FJ point without ever violating the experimental inequality constraints of the problem -- the latter point being, perhaps, the key requirement that sets experimental optimization apart from other (numerical) optimization contexts. These results are very promising as they represent, to the best of our knowledge, the first theoretical tool capable of guaranteeing this sort of behavior in a context where much of the methodology has traditionally been \emph{ad hoc} in nature.

However, one cannot avoid noticing that the conditions, while having an implementable form (via the project-and-filter approach), are nevertheless conceptual in nature. A quick inspection of (\ref{eq:SCFO1})-(\ref{eq:SCFO7}) makes this clear since to enforce them one requires:

\begin{itemize}
\item the exact values of $g_{p,j} ({\bf u}_k)$, which will not be available in most experimental settings due to noise (measurement) errors,
\item the Lipschitz constants $\kappa_{p,ji}$ and $M_{\phi, i_1 i_2}$, which will not, in most cases, be known since the experimental functions themselves are assumed unknown,
\item the gradients $\nabla g_{p,j} ({\bf u}_k)$ and $\nabla\phi_p ({\bf u}_k)$, which are also unknown.
\end{itemize}

\noindent While these problems may seem daunting, the good news is that one can nevertheless propose strategies that attempt to fill these ``knowledge gaps'' with data-driven estimations, and which may then be made robust by considering the uncertainty of the constraint/Lipschitz/gradient estimates. We refer the interested reader to the supplementary document \cite{Bunin:SCFOImp} for a comprehensive discussion that essentially addresses all of the issues necessary to make the theory discussed here experimentally viable.

Finally, we want to conclude by noting that an open-source experimental optimization solver incorporating the ideas of this paper and the supplement \cite{Bunin:SCFOImp} is already available \cite{SCFOug}. Successful results for several simulated and experimental \cite{Bunin2013p} problems have been obtained, with more applications planned in the future.  

\bibliography{genbib}
\bibliographystyle{siam}

\end{document}